\newtheorem{lemma}{Lemma}
\newtheorem{paradox}{Paradox}
\newtheorem{corollary}{Corollary}
\newtheorem{theorem}{Theorem}
\newtheorem{definition}{Definition}
\numberwithin{table}{section}
\numberwithin{equation}{section}
\begin{document}

\title{Strategic Customer Behavior in an M/M/1 Feedback Queue with General Payoffs}

\author{
  Peter Taylor\thanks{School of Mathematics and Statistics, The University of Melbourne, Australia. \texttt{taylorpg@unimelb.edu.au}}
  \and
  Jiesen Wang\thanks{Korteweg-de Vries Institute, University of Amsterdam, Netherlands. \texttt{j.wang2@uva.nl}}
}

\maketitle
\begin{abstract}


We consider an M/M/1 feedback queue in which service attempts may fail, requiring the customer to rejoin the queue. Arriving customers act strategically, deciding whether to join the queue based on a threshold strategy that depends on the number of customers present. Their decisions balance the expected service reward against the costs associated with waiting, while accounting for the behavior of others. 

This model was first analyzed by Fackrell, Taylor and Wang (2021), who assumed that waiting costs were a linear function of the time in the system. They showed that increasing the reward for successful service or allowing reneging can paradoxically make all customers worse off. In this paper, we adopt a different setting in which waiting does not incur direct costs, but service rewards are subject to discounting over time. We show that under this assumption, paradoxical effects can still arise. 

Furthermore, we develop a numerical method to recover the sojourn time distribution under a threshold strategy and demonstrate how this can be used to derive equilibrium strategies under other payoff metrics.



\end{abstract}

\section{Introduction}

In the study of strategic behavior in queueing systems, the waiting time is typically incorporated into the payoff function as a cost. Much of the literature assumes this cost to be linear in the waiting time; see, for example, \citet{N69}, \citet{HH95}, \citet{HR15}, \citet{HO18}, and \citet{FTW21}. Under this assumption, evaluating the expected payoff reduces to computing the expected waiting time.

However, many practical problems involve nonlinear waiting costs that depend on the entire distribution of waiting times, not merely on the first moment. In \citet{HR01}, the authors analyzed an \( M/M/m \) queue with nonlinear waiting costs, allowing customers to renege, and derived the homogeneous Nash equilibrium distribution of abandonment times. Their analysis evaluated the waiting time distribution of a tagged customer—who joins the queue without reneging while all others abandon after a random time—to compute the customer’s expected future gain. The authors of \citet{HR01} observed that assuming a convex cost function is common when modeling reneging behavior, as it reflects increasingly adverse conditions associated with longer waiting times. Similarly, \citet{SM04} investigated reneging behavior by incorporating customers’ subjective beliefs about their waiting time distributions into the determination of expected payoffs. In contrast, \citet{HH95} assumed a linear waiting cost but modeled the service value as constant for a fixed time after arrival, dropping to zero thereafter, thereby capturing the influence of the tail of the waiting time distribution.
	
In economics, future costs and rewards are typically assumed to be continuously discounted at a positive interest rate. As a result, discounted rewards are commonly used in payoff formulations; see, for example, \citet{S72}, \citet{2H75}, \citet{CF01}, and \citet{LMT22}. When discounted rewards are considered, it is standard to include an outside option or opportunity—see \citet{CF01} for a representative example. From a mathematical standpoint, this assumption is also essential: without an outside opportunity, it may become optimal to wait indefinitely for a reward that is so heavily discounted over time that it effectively becomes negligible. 
	
From a modelling perspective, we argue that a linear cost measure is likely to be more appropriate when the sojourn time is short—such as waiting for a haircut or a cup of coffee. However, when the sojourn time is long, a discounted reward may be a more suitable measure. For example, in a kidney transplant waiting system \citep{SZ04}, a patient's health or the organ quality may deteriorate over time, making it reasonable to assume that the reward is continuously discounted at some rate.

In this paper, we revisit the model introduced by \citet{FTW21}, but adopt a discounted reward structure for the payoff. Specifically, we consider an M/M/1 queueing system, where each customer pays a cost \( v\) to join the queue. The customer remains in the queue until it receives service. However there is a probability \( 1-q \) that the service fails, in which case the customer returns to the end of the queue. When the customer eventually receives a successful service, she is allocated a reward \( e^{-\alpha W} \) where $W$ is the time that she has been in the queue. 

The service discipline follows a first-come-first-served (FCFS) policy. An arriving customer decides whether to join the system based on whether doing so yields a positive expected payoff. We assume the system begins empty, with no customers initially present.
 
We first assume that joining customers remain in the system until they successfully complete the service and establish the Nash equilibrium threshold strategy. Then we analyze the case where customers are allowed to leave every time their service fails. 
We abbreviate the above two situations as the $N$-case (non-reneging) and the $R$-case (reneging), respectively. 


We derive the symmetric Nash equilibrium thresholds in both the \( N \)-case and the \( R \)-case. Our results show, counterintuitively, that the stationary individual payoff may increase when the discount rate \( \alpha \) or the cost of the outside option \( v \) increases. Furthermore, allowing reneging does not necessarily make customers better off, consistent with the observation in \citet[Paradox 2]{FTW21}.

In the above analysis, the discount parameter $\alpha$ is treated as a fixed parameter that models how the value of the reward decreases over time. However, there are two alternative interpretations for this parameter. 

First, $\alpha$ can be thought of as the rate of an independent exponential {\it killing variable} $\tau$, which terminates the operation of the whole process. Denoting the waiting time distribution of the tagged customer by \( W(u)\), \(\mathbb{E}[e^{-\alpha W}]   = \int_0^\infty e^{-\alpha u} dW(u) \) is the expected value of \(I(W < \tau)\) the indicator random variable that the soujourn time finishes before the process terminates. The mathematical analysis that arises from such a model is identical to that arising in the discounted case. However, physically, it models a system where the reward for successful service stays fixed, but it is earned only if the process has not been killed when the sojourn time is completed. We shall use this interpretation in  Sections~\ref{sec:ExNRcase} and \ref{sec:ExRcase}, where we use {\it coupling} to derive some stochastic monotonicity results.

Second, $\alpha$ can be thought of as the variable in the Laplace transform 
\(\mathbb{E}[e^{-\alpha W}]\) of the sojourn time. We shall use this interpretation in 
Section~\ref{sec:ExTimeDist}, to compute the sojourn time distribution by applying numerical inversion and proceed to evaluate equilibrium strategies under other payoff metrics. 

Thus, $\alpha$ plays three distinct roles in our analysis:
\begin{enumerate}\label{threeroles}
    \item as a discount rate,
    \item as an exponential killing rate, and
    \item as a Laplace transform variable.
\end{enumerate} 

This paper is organised as follows. Section~\ref{sec:preliminary} introduces the model and outlines the key components needed for our analysis. In Section~\ref{sec:ExNRcase}, we focus on the $N$-case and use matrix-analytic methods to compute the expected discounted reward of a tagged customer, conditioned on her joining position and the threshold strategy used by others. We then determine a symmetric Nash equilibrium threshold.
Section~\ref{sec:ExRcase} extends the model by allowing customers to renege whenever their service fails, assuming that the reneging and joining strategies follow the same threshold. We compute the corresponding Nash equilibrium and compare it with that of the $N$-case.
In Section~\ref{sec:ExParadox}, we present two paradoxes. Section~\ref{sec:ExTimeDist} numerically computes the sojourn time distribution of a tagged customer, given her joining position and the threshold used by others. We illustrate how this distribution can be applied to evaluate alternative payoff structures through an example.

\section{Background and Preliminaries}
\label{sec:preliminary}

Our work analyzes how customer behavior influences system dynamics, which in turn affects individual decision-making. In this section, we introduce the model under study; the joining strategy adopted by customers, assuming a common strategy across all individuals; the definition of a Nash equilibrium, which characterizes the strategic profile; and the mathematical tools used in the analysis.

We consider an M/M/1 feedback queue with arrival rate \(\lambda\) and service rate \(\mu\), operating under a FCFS discipline. After a service is completed, with probability \(q\), the service is deemed successful and the customer leaves the system or, with probability \(1 - q\), the service is deemed a failure and the customer rejoins the end of the queue.

Since service times are exponentially distributed, the sojourn time of an arriving customer depends on the number of customers already in the system, given the strategies of others. Therefore, it is reasonable to assume that customers adopt a threshold strategy: they join the system only if the number of customers present does not exceed a certain threshold.  
We now formally define a generalized threshold strategy with a real-valued parameter, originally introduced by \citet{H96}. Such a threshold strategy is a function \( u: \mathbb{Z} \rightarrow [0,1] \) that determines the joining behavior of an arriving customer. Specifically, \( u(i) \) represents the probability that a customer will join the queue upon finding \( i-1 \) customers already present (thereby taking position \( i \) if she joins). 

\begin{definition}[Threshold Strategy] \label{D1}  
A threshold strategy \(\mathcal{T}(x)\), where \(x \in \mathbb{R}^+\) and \(n \equiv \lfloor x \rfloor\), is characterized by the joining probability function
\begin{equation}
    u^{(x)}(i) = 
    \begin{cases}
        1 & \text{if } i \leq n, \\
        p \equiv x - n & \text{if } i = n + 1, \\
        0 & \text{if } i \geq n + 2,
    \end{cases}
\end{equation}
where \(i\) denotes the position upon joining.
\end{definition}  
\noindent A customer following threshold strategy \(\mathcal{T}(x)\) always joins the queue if her position would be at most \( x \), that is, \( i \leq \lfloor x \rfloor \). If her position would be \( \lfloor x \rfloor + 1 \), she joins with probability \( p = x - \lfloor x \rfloor \). Otherwise for positions \( i \geq \lfloor x \rfloor + 2 \), she does not join.

In \citet{FTW21}, the payoff was defined as the difference between a fixed reward earned upon successful service completion and the expected sojourn time. Here, we generalize this by assuming that a unit initial reward is discounted continuously at rate \(\alpha \geq 0\) over the customer’s sojourn time \(W\), resulting in a net reward of \(e^{-\alpha W}\).
Customers must also pay an admission fee \(v\) upon joining the system. Each arriving customer observes the current queue length and decides whether to join or balk based on their expected payoff,
\begin{equation} \label{eq:payoff1}
    \mathbb{E}[e^{-\alpha W}] - v.
\end{equation}
Customers join if this expected payoff is positive, balk if it is negative, and are indifferent when it equals zero. 

We assume all arriving customers adopt identical threshold strategies. A strategy \(\mathcal{T}(x)\) forms a Nash equilibrium when no customer can benefit by unilaterally deviating from it. The best response depends on the chosen payoff criterion. To determine the Nash equilibrium, we arbitrarily select a customer, referred to as the tagged customer, and evaluate whether \(\mathcal{T}(x)\) is optimal for her, assuming all others follow it as well.

The stationary distribution of the queue length is determined by customers' threshold choices. Having derived the Nash equilibrium strategy, we proceed to compare the expected payoffs when customers adopt the equilibrium threshold strategy under different payoff structures in the stationary regime. By the PASTA (Poisson Arrivals See Time Averages) property, if the system is in a stationary state, then the number of customers already present when a customer arrives follows the stationary distribution $\mathbb{\pi} =(\pi_i^{(x)})$ and so the stationary expected individual payoff is
\begin{equation} \label{eq:payoff}
    V(x) := \sum_{i=0}^{\lfloor x \rfloor} \pi_i^{(x)} V_i^{(x)},
\end{equation}
where $V_i^{(x)}$ represents the expected payoff for a customer who joins when the system is in this state. 

\section{The Case Where Customers Cannot Renege}  \label{sec:ExNRcase}

In this section, we analyze customers' decisions based on whether the expected payoff in \eqref{eq:payoff1} is positive. To compute the symmetric Nash equilibrium, we arbitrarily select a customer as the tagged customer and calculate her expected payoff, assuming that all other customers adopt a common threshold strategy \(\mathcal{T}(x)\), while the tagged customer may use a different threshold. We begin with a Markovian model that enables us to derive the distribution of the sojourn time.

For each arrival state $1 \leq i \leq \lfloor x \rfloor + 1$, let $W_i^{(x)}$ denote the sojourn time of a tagged customer who joins the system in position $i$, assuming all other customers follow a threshold strategy $x$. To compute $W_i^{(x)}$, we analyze a Markov jump process that tracks the progress of the tagged customer through the system. Let
\begin{itemize}
\item $i$ be the current position of the tagged customer in the queue, and 
\item $j$ be the total number of customers in the system.
\end{itemize}
For $1 \leq i \leq j \leq \lfloor x \rfloor + 1$, let $W_{i,j}^{(x)}$ denote the time until the tagged customer completes service and departs the system, given that there are initially $j$ customers in the system, she is in position $i$, and all other customers use threshold $x$. Note that $W_i^{(x)}$ has the same distribution as $W_{i,i}^{(x)}$.

The $W_{i,j}^{(x)}$ are times to absorption for a continuous-time Markov chain $\{X(t), t \geq 0\}$, with state space 
\begin{equation}
\label{eq:state}
\mathcal{S} \equiv \left\{(i, j) \,:\, 1 \leq i \leq j \leq \lfloor x \rfloor + 1 \right\} \cup \{0\},
\end{equation}
with state $0$ representing the situation in which the tagged customer has exited the system. This process is a continuous-time quasi-birth-and-death (QBD) process on $\mathcal{S}$, with $j$ representing the level and $i$ the phase. All states $(i, j)$ are transient, while state $0$ is absorbing.

Let $T(t)$ be the time until the first transition after time $t$, then $T(t) - t$ is exponentially distributed with rate $\lambda+\mu$. At time $T(t)$, an arrival occurs with probability $ {\lambda}/{(\lambda+\mu)}$. When $j < \lfloor x \rfloor$, the arriving customer joins the system with probability $1$; when $j = \lfloor x \rfloor$, the arriving customer joins the queue with probability $p$; when $j = \lfloor x \rfloor+1$, the arriving customer balks. 
On the other hand, a service completion occurs at time $T(t)$ with probability ${\mu}/{(\lambda+\mu)}$, after which a customer leaves the system with probability $q$ and rejoins the end of the queue with probability $1-q$. If the customer in service is the tagged customer, which occurs when $i=1$, her future sojourn time is $0$ with probability $q$. Otherwise, her next position is $j$. If the customer in service is not the tagged customer, the position of the tagged customer decreases by $1$, and the total number of customers decreases by $1$ with probability $q$ and stays unchanged with probability $1-q$.

Therefore, for $1 \leq i \leq j \leq \lfloor x \rfloor + 1$ and $t \geq 0$, given $X(t) = (i,j)$, the conditional transition probabilities are:
{\scriptsize	\begin{align} \label{eq:transition1}
	P\{X(T(t)) = (i',j') \mid X(t) = (i,j)\} & \\
	=  & \begin{dcases}
	\frac{\lambda}{\lambda+\mu} & \text{if} \quad  j < \lfloor x \rfloor, \, (i',j') = (i,j+1) \text{ or }   j > \lfloor x \rfloor, \, (i',j') = (i,j) \\
	\frac{\lambda p}{\lambda+\mu} & \text{if} \quad j = \lfloor x \rfloor, \, (i',j') = (i,j+1) \\
	\frac{\lambda (1-p)}{\lambda+\mu} & \text{if} \quad  j = \lfloor x \rfloor, \, (i',j') = (i,j) \\
	\frac{\mu q}{\lambda+\mu} & \text{if} \quad   i =1, \, (i',j') = 0 \text{ or }  i >1, \, (i',j') = (i-1,j-1)\\
	\frac{\mu (1-q)}{\lambda+\mu} & \text{if} \quad  i =1, \, (i',j') = (j,j)  \text{ or }  i >1, \, (i',j') = (i-1,j)  \,. 
	\end{dcases} \notag
\end{align}}
Assuming that the state is $(i,j)$ at time $t$, the expected discounted reward $\mathbb{E}\left( e^{-\alpha W^{(x)}_{i,j}}\right)$ is given by

{\scriptsize
\begin{align} \label{eq:Wij}
	&\mathbb{E}\left(e^{-\alpha W^{(x)}_{i,j}}\right) \\
	=& \mathbb{E}\left( \mathbb{E}\left(e^{-\alpha W^{(x)}_{i,j}} \mid T(t), X(T(t))\right) \right) \notag \\
	=& \int_{0}^{\infty} (\lambda+\mu) \, e^{-(\lambda+\mu) y}  \, \sum_{(i', j')} P(X(T(t)) = (i',j') \mid X(t) = (i,j) ) \, \mathbb{E}\left(e^{-\alpha W^{(x)}_{i,j}} \mid T(t) = y, X(T(t)) = (i',j') \right)  dy \notag \\
	=& \int_{0}^{\infty} (\lambda+\mu) \, e^{-(\lambda+\mu) y}  \, \sum_{(i', j')} P(X(T(t)) = (i',j') \mid X(t) = (i,j) ) \, \mathbb{E}\left(e^{-\alpha \left(y + W^{(x)}_{i',j'} \right)} \right)  dy \notag\\
	=& \left( \sum_{(i', j')} P(X(T(t)) = (i',j') \mid X(t) = (i,j) ) \, \mathbb{E} \left(e^{-\alpha W^{(x)}_{i',j'}} \right) \right) \, \int_{0}^{\infty} (\lambda+\mu) \, e^{-(\lambda+\mu+\alpha) y}  \,  dy \notag \\
	=& \left( \frac{\lambda}{\lambda+\mu} \left( \mathbb{E} \left(e^{-\alpha W^{(x)}_{i, j+1}}\right)\mathbbm{1}_{\{i <  \lfloor x \rfloor\}}  + \left( p \,  \mathbb{E} \left(e^{-\alpha W^{(x)}_{i, j+1}}\right) + (1-p) \, \mathbb{E} \left(e^{-\alpha W^{(x)}_{i, j}}\right) \right) \mathbbm{1}_{\{i = \lfloor x \rfloor\}}  + \mathbb{E} \left(e^{-\alpha W^{(x)}_{i, j}}\right) \mathbbm{1}_{\{i >  \lfloor x \rfloor\}}  \right) \right.\notag \\  
	&  \left. + \frac{\mu}{\lambda+\mu} \, \left( \left( q \,\mathbb{E} \left(e^{-\alpha W^{(x)}_{i-1, j-1}}\right) + (1-q) \, \mathbb{E} \left(e^{-\alpha W^{(x)}_{i-1, j}}\right) \right) \mathbbm{1}_{\{i > 1\}} + \left(q  + (1-q) \, \mathbb{E} \left(e^{-\alpha W^{(x)}_{j, j}}\right) \right) \mathbbm{1}_{\{i = 1\}}  \right) \right) \frac{\lambda+\mu}{\lambda+\mu+\alpha}  \notag\\
	=& \frac{\lambda}{\lambda+\mu+\alpha} \left( \mathbb{E} \left(e^{-\alpha W^{(x)}_{i, j+1}}\right)\mathbbm{1}_{\{i <  \lfloor x \rfloor\}}  + \left( p \,  \mathbb{E} \left(e^{-\alpha W^{(x)}_{i, j+1}}\right) + (1-p) \, \mathbb{E} \left(e^{-\alpha W^{(x)}_{i, j}}\right) \right) \mathbbm{1}_{\{i =  \lfloor x \rfloor\}}  + \mathbb{E} \left(e^{-\alpha W^{(x)}_{i, j}}\right) \mathbbm{1}_{\{i >  \lfloor x \rfloor\}}  \right) \notag \\
	 &  + \frac{\mu}{\lambda+\mu+\alpha} \, \left( \left( q \,\mathbb{E} \left(e^{-\alpha W^{(x)}_{i-1, j-1}}\right) + (1-q) \, \mathbb{E} \left(e^{-\alpha W^{(x)}_{i-1, j}}\right) \right) \mathbbm{1}_{\{i > 1\}} + (1-q) \, \mathbb{E} \left(e^{-\alpha W^{(x)}_{j, j}}\right)\mathbbm{1}_{\{i = 1\}}  \right) \notag \\
     &+ \frac{\mu q}{\lambda+\mu+\alpha} \, \mathbbm{1}_{\{i = 1\}} \,.\notag
\end{align}}
The final line models the completion of the tagged customer's sojourn in the system, while the remaining lines model transitions to other transient states. If we define
\begin{equation}\label{eq:totalDisFactor1} 
\bm{\gamma}^{(x)}_\alpha \equiv \left[ \mathbb{E}\left(\, e^{-\alpha W^{(x)}_{1,1}}\right) ,  \mathbb{E}\left(\, e^{-\alpha W^{(x)}_{1,2}}\right),  \mathbb{E}\left(\, e^{-\alpha W^{(x)}_{2,2}}\right), \ldots,  \mathbb{E}\left(\, e^{-\alpha W^{(x)}_{\lfloor x \rfloor+1,\lfloor x \rfloor+1}}\right) \right]^T,
\end{equation}
then $\bm{\gamma}^{(x)}_\alpha$ satisfies the linear system
\begin{equation}
\label{eq:EPoisson}
\left(I - P_\alpha^{(x)}\right) \bm{\gamma}^{(x)}_\alpha = \bm{g}_\alpha \,,
\end{equation}
where
\begin{equation} \label{eq:g}
\bm{g}_{\alpha} = \frac{\mu q}{\alpha+\lambda+\mu} \sum_{j = 1}^{\lfloor x\rfloor +1}  \, \bm{e}_{\frac{j(j-1)}{2}+1} \,,
\end{equation}
and $P_{\alpha}^{(x)}$ is defined in Appendix \ref{appendix:1.1}. We can rewrite \eqref{eq:g} in the form 
\begin{equation} \label{eq:d}
\bm{\gamma}^{(x)}_\alpha =  \frac{\mu q}{\alpha+\lambda+\mu} \, \left(I-P_{\alpha}^{(x)}\right)^{-1} \, \bm{d} \,,
\end{equation}
where the vector \( \bm{d} \equiv \sum_{j = 1}^{\lfloor x\rfloor +1}  \, \bm{e}_{\frac{j(j-1)}{2}+1}\) has a one in the components that correspond to the states where the tagged customer is being served (those that have the form \( (1, \ell) \) for \( 1 \leq \ell \leq \lfloor x \rfloor + 1 \)) and zeros elsewhere. The \( (i, j) \)th component of the vector \( \big(I - P_{\alpha}^{(x)} \big)^{-1} \bm{d}\) gives the expected total number of visits to these states before the process is killed or absorbed, conditional on the initial state being \( (i, j) \).

For any $\alpha \geq 0$ and threshold $x$, the matrix $P_{\alpha}^{(x)}$ is irreducible and has at least one row whose sum is strictly less than 1. By a corollary of the Perron–Frobenius theorem \cite[page 8]{S06}, all eigenvalues $r^{(x)}$ of $P_{\alpha}^{(x)}$ satisfy $|r^{(x)}| < 1$. Consequently, any real eigenvalue $1 - r^{(x)}$ of the matrix $I - P_{\alpha}^{(x)}$ is strictly positive. This implies that $\det(I - P_{\alpha}^{(x)}) \neq 0$, and therefore $I - P_{\alpha}^{(x)}$ is nonsingular.

Equation \eqref{eq:EPoisson} is a special case of Poisson’s equation for a level-dependent QBD process. The QBD structure allows us to use the methodology developed in \citet[Theorem 3.3]{DLL13} to compute $\bm{\gamma}^{(x)}_\alpha$ efficiently. Specifically, we solve \eqref{eq:EPoisson} using Algorithm \ref{alg:Poisson} in Appendix \ref{appendix:algorithm1}. For a detailed explanation of the matrices $U^{(j)}, \Gamma^{(j)}, G^{(j)}$ used in Algorithm \ref{alg:Poisson}, see \citet[Chapter 12]{LR99}. Note that the matrix $R^{(j)}$ in \citet[Chapter 12]{LR99} corresponds to our matrix $\Gamma^{(j)}$.

\begin{figure}[h!]
	\centering
	\includegraphics[width = 0.79\textwidth]{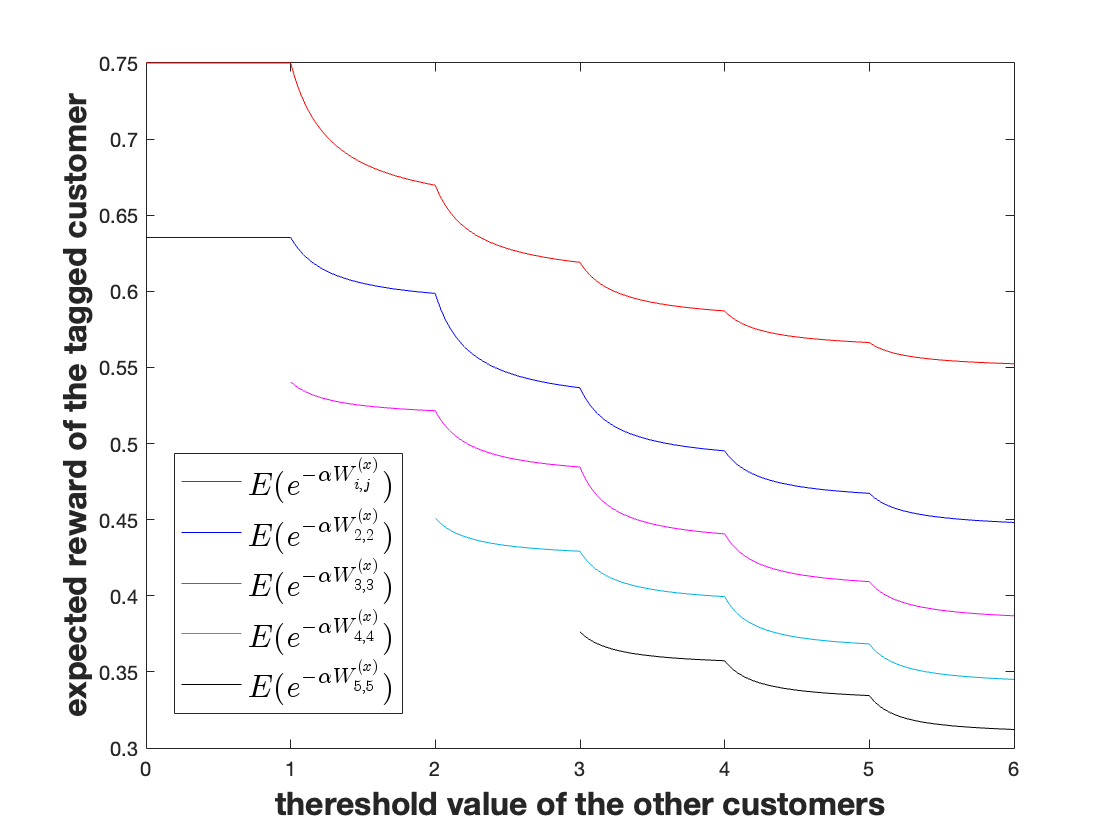}
	\caption{Expected discounted reward for the tagged customer, when $\lambda = 1, \mu = 0.5, q = 0.3, \alpha = 0.05$.} \centering \label{fig:ERewardN}
\end{figure}

In Figure \ref{fig:ERewardN}, we have plotted $\mathbb{E}\left(e^{-\alpha W_{i,i}^{(x)}}\right)$ for $0 \leq x \leq 6$ and $1 \leq i \leq 5$ . The function $\mathbb{E}\left(e^{-\alpha W_{i,i}^{(x)}}\right)$ is meaningful only when $\lfloor x \rfloor \geq \max\{i - 2,0\}$, since the tagged customer cannot join in a position greater than $\lfloor x \rfloor + 2$. Moreover, it remains constant for $0 \leq x \leq 1$ because no other customers will join the system while the tagged customer is present if their threshold $x$ is less than $1$. In this regime, the value of $x$ has no impact on the tagged customer’s expected reward.
When $x$ is small, we can compute $\bm{\gamma}_\alpha^{(x)}$ analytically by solving Equation~\eqref{eq:EPoisson}. For example, for $0 \leq x \leq 1$ and $n = 1,2$, we obtain
\begin{equation}
\mathbb{E}\left(e^{-\alpha W_{1,1}^{(x)}}\right)= \frac{\mu q}{\alpha+\mu q} \qquad \mathbb{E}\left(e^{-\alpha W_{2,2}^{(x)}}\right) = \frac{\mu^2 q (\alpha+2\mu q-\mu q^2)}{(\alpha+\mu q)(\alpha^2+2\mu \alpha +2 \mu^2 q-\mu^2 q^2)}  \,.
\end{equation}

We now view the system from the second point of view in Section \ref{threeroles}. That is, we think of $\alpha$ as the rate of an exponential random variable that kills the process independently of its current state. 
The state space of this continuous-time Markov chain is still \( \mathcal{S}\) as defined in (\ref{eq:state}), and the transition rates associated with arrivals and services are the same. However, there is an extra transition rate \( q((i,j),0) = \alpha\) for every state \((i,j)\). 

The matrix \( P_{\alpha}^{(x)} \) can thus be interpreted as the transition matrix of the discrete-time jump chain constructed for the killed chain in a similar way that the transition matrix with entries given in (\ref{eq:transition1}) was constructed for the chain without killing. This discrete-time chain has a substochastic transition matrix. It can be absorbed either when the tagged customer departs the system or when the process is killed. 

The transition structure reflected in \( P_{\alpha}^{(x)} \) is the same as in \eqref{eq:transition1}, except that the denominator in the transition probabilities is replaced by \( \lambda + \mu + \alpha \). The transition probabilities in \eqref{eq:transition1} constitute the $\alpha = 0$ case. 

When \( \alpha > 0 \), the row sums of the matrix \( P_{\alpha}^{(x)} \) are strictly less than one, reflecting the fact that the process can enter the absorbing state from any transient state. Specifically, transitions to the absorbing state occur from state $(i,j)$ ($i \neq 1$) with probability 
$\alpha/(\lambda + \mu + \alpha)$, and from state $(1,j)$ with probability 
$(\alpha + \mu q)/(\lambda + \mu + \alpha)$.

For any value of $\alpha \geq 0$, let \( N^{(x)}_{i,j} \) to be the number of visits to states of the form \( (1, j) \) before absorption, given that the current state is \( (i,j) \) and all other customers follow the threshold strategy \( x \). The following two lemmas concern the stochastic behavior of \( N^{(x)}_{i,j} \).

\begin{lemma} \label{lemma:1}
    For \( i_1 < i_2 \), the random variable \( N^{(x)}_{i_1,i_1} \) stochastically dominates \( N^{(x)}_{i_2,i_2} \).
\end{lemma}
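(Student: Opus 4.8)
The plan is to argue by \emph{coupling}, using the second interpretation of $\alpha$ from Section~\ref{threeroles}: I regard $\alpha$ as the rate of a single exponential killing clock $\tau$ that is \emph{shared} by the two processes being compared. Write $A$ for a copy started in state $(i_1,i_1)$ and $B$ for a copy started in $(i_2,i_2)$, and run both \emph{without} killing while recording the successive times $\sigma^A_1\le\sigma^A_2\le\cdots$ and $\sigma^B_1\le\sigma^B_2\le\cdots$ at which the tagged customer enters service (i.e.\ visits a state of the form $(1,\cdot)$). I would couple the outcomes of the tagged customer's \emph{own} service attempts across the two copies by their index, using one sequence of $\mathrm{Bernoulli}(q)$ variables $C_1,C_2,\dots$; then both copies fail on attempts $1,\dots,K-1$ and succeed on attempt $K$, where $K=\min\{m:C_m=1\}$ is common to $A$ and $B$. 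Under the shared clock, $N^{(x)}_{i_1,i_1}=\#\{m\le K:\sigma^A_m<\tau\}$ and $N^{(x)}_{i_2,i_2}=\#\{m\le K:\sigma^B_m<\tau\}$, so the whole statement reduces to the pathwise inequality $\sigma^A_m\le\sigma^B_m$ for every $m\le K$: if the $m$-th service entry of $A$ never occurs later than that of $B$, then every attempt that $B$ completes before being killed is also completed by $A$, whence $N^{(x)}_{i_1,i_1}\ge N^{(x)}_{i_2,i_2}$ surely.

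The key structural simplification I would exploit is that the time for the tagged customer to move from position $i$ to service is the sum of $i-1$ independent $\mathrm{Exp}(\mu)$ service-completion intervals, \emph{independently} of the arrivals and of the success or failure of the customers ahead of her: every service completion lowers the number of customers ahead of her by exactly one (a failed customer rejoins behind her), while arrivals never change that number. Hence, driving both copies with one common $\mathrm{Poisson}(\mu)$ stream of service completions, the $m$-th service entry of $A$ occurs at the $M^A_m$-th completion, where $M^A_m=(i_1-1)+\sum_{l=2}^{m}p^A_l$ and $p^A_l$ is the position at which the tagged customer rejoins the back of the queue at the start of her $l$-th passage (similarly for $B$). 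Because the completion stream is shared, $\sigma^A_m\le\sigma^B_m$ holds as soon as $M^A_m\le M^B_m$, and since $i_1\le i_2$ this in turn follows from the \emph{rejoin-position domination} $p^A_l\le p^B_l$ for all $l$.

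It remains to couple the arrivals so that $p^A_l\le p^B_l$. Writing $p_{l+1}=1+b^{(l)}$, where $b^{(l)}$ is the number of customers accumulated \emph{behind} the tagged customer during passage $l$, one checks that $b^{(l)}$ grows by one at each accepted arrival and at each failure of a customer served ahead of the tagged one, and is reset to $0$ each time the tagged customer cycles back. I would use a monotone coupling of the arrival stream --- common arrival epochs together with a common acceptance uniform, so that the \emph{less} full queue accepts an arrival whenever the fuller one does --- together with common success/failure coins for the background customers, and prove $p^A_l\le p^B_l$ by induction on $l$. Granting $p^A_l\le p^B_l$, copy $B$ begins passage $l$ at least as far back and with a queue at least as full, so it experiences at least as many failures of customers ahead of it and spends at least as long before reaching service; the goal is to turn this into $b^{A,(l)}\le b^{B,(l)}$ and hence $p^A_{l+1}\le p^B_{l+1}$.

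The hard part is exactly this inductive step, because of the finite threshold. A fuller queue accepts \emph{fewer} arrivals per unit time, so the arrival contribution to $b^{(l)}$ is not monotone in the starting fullness in the naive sense, and the two copies traverse their $l$-th passages over different, time-offset real intervals, so one cannot simply compare the behind-counts at a common instant. Overcoming this is where the care is needed: I would track the number behind the tagged customer (which, unlike the total occupancy, is non-decreasing between rejoin events and is insensitive to departures from the front), use the boundedness $p_l\le\lfloor x\rfloor+1$, and synchronise the comparison at the rejoin epochs rather than in absolute time, so that the monotone arrival coupling yields $b^{A,(l)}\le b^{B,(l)}$ despite the capacity constraint. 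Once $p^A_l\le p^B_l$ is established for all $l$, the common-clock argument of the first paragraph delivers the stochastic dominance.
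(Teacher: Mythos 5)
Your reduction is sound as far as it goes: sharing the killing clock $\tau$, sharing the attempt coins so that both copies need the same number $K$ of service attempts, and observing that the tagged customer's position drops by exactly one at every service completion (failures rejoin behind her, arrivals never overtake her) correctly reduce stochastic dominance of $N^{(x)}_{i_1,i_1}$ over $N^{(x)}_{i_2,i_2}$ to the pathwise inequality $\sigma^A_m \le \sigma^B_m$, and thence to the rejoin-position domination $p^A_l \le p^B_l$. But the proof stops exactly at the crux. The inductive step $b^{A,(l)} \le b^{B,(l)}$ is never established; you name the two obstructions yourself --- the threshold makes acceptance \emph{anti}-monotone in queue fullness, and the two copies traverse their $l$-th passages over time-offset windows --- and then assert that ``care'' and synchronising at rejoin epochs will overcome them, without giving the argument. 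This is not a routine verification: on a given passage the emptier copy $A$ can accept arrivals that the fuller copy $B$ rejects (e.g.\ $A$ alone accepts two arrivals while $B$ sits at capacity and accepts none), so $A$'s behind-count exceeds $B$'s from the arrival term, and it must be compensated by $B$'s extra failures-ahead and longer window; nothing in your sketch shows the compensation suffices pathwise, and it is not even clear that the per-index inequality $p^A_l \le p^B_l$ holds under any coupling of the kind you describe (your ``common success/failure coins for the background customers'' is also ambiguous --- indexed by epoch or by customer --- and the choice matters). As written, the lemma's entire content is concentrated in the step you leave open.

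The paper's proof avoids this difficulty rather than solving it, by realigning the two processes in \emph{local time} instead of comparing time-synchronous passages. It freezes $Q_1$ and runs $Q_2$ (killing included) until either $Q_2$ is killed --- in which case $N^{(x)}_{i_2,i_2}=0$ and domination is trivial --- or $Q_2$'s tagged customer descends to position $i_1$, at which moment the two tagged customers occupy the \emph{same} position and $Q_2$'s queue length dominates $Q_1$'s; memorylessness of the exponential clocks is what licenses the freeze. From then on all primitives are shared, so the tagged positions coincide step for step, entries into service occur in pairs at common epochs, and the only invariant to maintain is queue-length domination at equal tagged positions --- which shared arrivals and shared coins preserve trivially even at the threshold, since an arrival rejected by the fuller $Q_2$ but accepted by $Q_1$ merely narrows the gap. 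At each shared failed attempt the rejoin positions are ordered because the queue lengths are, and the argument recurses. In short: the paper only ever compares copies whose tagged customers sit at identical positions, which is precisely the configuration in which the threshold causes no trouble, whereas your coupling insists on comparing passages at different congestion levels and different times, which is precisely where monotonicity breaks. To rescue your route you would need to either prove the rejoin-position domination (plausibly false per index) or weaken it to a cumulative statement $\sum_{l\le m} p^A_l \le (i_2-i_1) + \sum_{l\le m} p^B_l$ and show that suffices --- but the cleaner repair is to import the freeze-and-realign device.
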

\begin{proof}
We prove this by {\it coupling}. Consider two positions $i_1$ and $i_2$ with $i_1 < i_2$. Suppose there are two queues, $Q_1$ and $Q_2$, where a tagged customer joins at position $i_1$ in $Q_1$ and at position $i_2$ in $Q_2$. We first allow $Q_2$ to evolve while {\it freezing} $Q_1$ until either the $Q_2$ process is absorbed (which must necessarily occur as a result of killing) or the tagged customer reaches position $i_1$. If the latter occurs, the number of customers in $Q_2$ must be at least as large as in $Q_1$.

Next, we {\it unfreeze} $Q_1$ and couple the two systems so that they evolve on the same probability space: all arrival times, service durations, the killing random variable and Bernouilli random variables governing service success and joining decisions are shared between the two queues. The two queues then evolve together until the customers complete their first service attempt, either successfully departing or rejoining the queue. The possible sample path scenarios up to the first service completion are as follows:

\begin{itemize}
\item Case (i) The $Q_2$ process is killed before the tagged customer reaches position $i_1$; her number of visits to position 1 is zero.
\item Case (ii) The customer in $Q_2$ reaches position $i_1$, and both processes are killed before the tagged customer reaches position 1.
\item Case (iii) Both customers reach position 1, complete their first service successfully and leave the system.
\item Case (iv(a)) Both customers experience a failed service, rejoin their respective queues, and the queue lengths of $Q_1$ and $Q_2$ are equal.
\item Case (iv(b)) Both customers rejoin the queues after a failed service, but the queue length of $Q_1$ is strictly less than that of $Q_2$.
\end{itemize}

In Cases (i)–(iv(a)), it is clear that the number of visits to position 1 by the customer in $Q_2$ is less than or equal to that in $Q_1$. Case (iv(b)) mirrors the initial setting, and we can apply the same argument recursively until the customer eventually leaves the system.
\end{proof}

Noting the fact mentioned after equation (\ref{eq:d}) that the \( (i, j) \)th component of the vector \( \big(I - P_{\alpha}^{(x)} \big)^{-1} \bm{d}\) gives the expected total number of visits to states of the form \((1,\ell)\) before the process is absorbed, conditional on the initial state being \( (i, j) \), we immediately have a relationship between the expected discounted rewards. 

\begin{corollary} \label{corollary:1}
    
\[
\left( \left(I - P_{\alpha}^{(x)}\right)^{-1} \bm{d} \right)_{i_1, i_1} \geq \left( \left(I - P_{\alpha}^{(x)}\right)^{-1} \bm{d} \right)_{i_2, i_2} \,,
\]
which implies that
\[
\mathbb{E} \left( e^{-\alpha W_{i_1,i_1}^{(x)}} \right) \geq \mathbb{E} \left( e^{-\alpha W_{i_2,i_2}^{(x)}} \right).
\]
\end{corollary}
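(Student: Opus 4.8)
The plan is to read this statement as an immediate consequence of Lemma~\ref{lemma:1} together with the probabilistic interpretation of the resolvent $(I-P_\alpha^{(x)})^{-1}\bm{d}$ recorded just before the corollary. No fresh probabilistic construction is required; the content lies entirely in translating the stochastic dominance of the visit counts into the two displayed inequalities.

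First I would establish the matrix inequality. The discussion preceding the corollary identifies the $(i,j)$th entry of $(I-P_\alpha^{(x)})^{-1}\bm{d}$ as the expected number of visits to the serving states $\{(1,\ell)\}$ before absorption, started from $(i,j)$; in particular its diagonal entry indexed by $(i,i)$ equals $\mathbb{E}\bigl[N^{(x)}_{i,i}\bigr]$. Lemma~\ref{lemma:1} states that $N^{(x)}_{i_1,i_1}$ stochastically dominates $N^{(x)}_{i_2,i_2}$ whenever $i_1<i_2$, and first-order stochastic dominance of nonnegative integer-valued random variables forces the corresponding ordering of means. Since $I-P_\alpha^{(x)}$ was shown to be nonsingular via Perron--Frobenius, both expectations are finite, and $\mathbb{E}[N^{(x)}_{i_1,i_1}]\ge\mathbb{E}[N^{(x)}_{i_2,i_2}]$ is exactly the first displayed inequality.

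Second, I would pass to the expected discounted rewards using \eqref{eq:d}, which expresses $\bm{\gamma}^{(x)}_\alpha$ as the strictly positive scalar $\tfrac{\mu q}{\alpha+\lambda+\mu}$ times $(I-P_\alpha^{(x)})^{-1}\bm{d}$. By the ordering of components fixed in \eqref{eq:totalDisFactor1}, the entry of $\bm{\gamma}^{(x)}_\alpha$ indexed by $(i,i)$ is precisely $\mathbb{E}\bigl(e^{-\alpha W^{(x)}_{i,i}}\bigr)$. Multiplying the matrix inequality through by this positive constant preserves its direction and yields $\mathbb{E}(e^{-\alpha W^{(x)}_{i_1,i_1}})\ge\mathbb{E}(e^{-\alpha W^{(x)}_{i_2,i_2}})$, the second inequality. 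There is no genuine obstacle here: the only care needed is in the bookkeeping of the QBD indexing—matching the vector entry labelled $(i,i)$ to its coordinate in the level ordering of $\mathcal{S}$—and in the finiteness of the visit counts, both of which are already secured by the preceding discussion.
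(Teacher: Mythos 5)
Your proposal is correct and follows the paper's own (implicit) argument exactly: the paper derives the corollary by noting that the entries of $\left(I - P_{\alpha}^{(x)}\right)^{-1}\bm{d}$ are the expected visit counts $\mathbb{E}\bigl[N^{(x)}_{i,j}\bigr]$, so Lemma~\ref{lemma:1}'s stochastic dominance gives the first inequality, and Equation~\eqref{eq:d} transfers it to the discounted rewards via the positive scalar $\mu q/(\alpha+\lambda+\mu)$. Your only additions—spelling out that stochastic dominance implies the ordering of means and that nonsingularity guarantees finiteness—are harmless elaborations of steps the paper leaves tacit.
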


\begin{lemma}\label{lemma:2}
For integer $n \geq i$, the random variable \( N^{(x)}_{i,n} \) stochastically dominates \( N^{(x)}_{i,n+1} \).
\end{lemma}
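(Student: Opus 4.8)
The plan is to mirror the coupling used for Lemma~\ref{lemma:1}, but now to keep the tagged customer's position tied between the two systems and let the extra customer sit behind her until she first fails a service; at that moment the comparison collapses exactly onto the situation already handled by Lemma~\ref{lemma:1}. Concretely, I would run two copies of the killed chain of \eqref{eq:transition1} on a common probability space: a system $Q_1$ started in state $(i,n)$ and a system $Q_2$ started in state $(i,n+1)$. Both share the killing variable $\tau\sim\mathrm{Exp}(\alpha)$, the arrival epochs, the service-completion epochs, the success/failure Bernoulli variables (parameter $q$), and the join decisions at the threshold. For the latter I would use a single uniform variable per arrival and let a system holding $t$ customers admit the arrival iff $t<\lfloor x\rfloor$, or $t=\lfloor x\rfloor$ and the uniform is at most $p$; this makes admission monotone in the current occupancy. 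Initially the tagged customer is in position $i$ in both systems and $Q_2$ holds exactly one more customer, located behind her.

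The first step is to establish, up to the first epoch at which the tagged customer occupies position $1$, the invariant that the tagged customer shares a common position in $Q_1$ and $Q_2$ and that the total occupancies $j_1$ and $j_2$ of $Q_1$ and $Q_2$ satisfy $0 \le j_2 - j_1 \le 1$. I would verify this transition by transition. A service completion ahead of the tagged customer lowers her position by one in both systems simultaneously and, by the shared success Bernoulli, either removes a customer from both (gap unchanged) or returns one to the back of each (gap unchanged). An arrival, by the monotone admission rule, is accepted by $Q_2$ only if it is also accepted by the less-full $Q_1$, so the gap can only stay the same or shrink, never exceed one, and never make $j_1>j_2$. Once the gap reaches $0$ the two systems occupy an identical state and, sharing all randomness, evolve identically thereafter, so their visit counts agree.

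The decisive step is the first time the tagged customer reaches position $1$, which under the invariant happens simultaneously in both systems and contributes one common visit to each count. If killing or a successful service occurs first, both counts are equal and we are done. Otherwise the shared failure sends both tagged customers to the back, moving $Q_1$ to $(m_1,m_1)$ and $Q_2$ to $(m_2,m_2)$ with $m_1 \le m_2$ inherited from $j_1\le j_2$. This is exactly the configuration of Lemma~\ref{lemma:1}, which supplies a coupling of the continuations in which $Q_1$ makes at least as many further visits to position $1$ as $Q_2$ (with equality when $m_1=m_2$). Splicing this continuation onto the coupled history yields $N^{(x)}_{i,n}\ge N^{(x)}_{i,n+1}$ almost surely, hence the claimed stochastic dominance. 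I expect the main obstacle to be the arrival bookkeeping near the threshold: without the monotone shared-uniform admission rule one cannot rule out the occupancy gap widening or reversing, and it is precisely this control that guarantees $m_1\le m_2$ and licenses the reduction to Lemma~\ref{lemma:1}.
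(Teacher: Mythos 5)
Your proposal is correct and takes essentially the same route as the paper: the same shared-randomness coupling of the two systems, a case analysis at the tagged customer's first service attempt (killing, success, failure with equalized queue lengths, failure with a one-customer gap), and a reduction of the last case to the configuration of Lemma~\ref{lemma:1}. Your monotone shared-uniform admission rule and the invariant $0 \leq j_2 - j_1 \leq 1$ simply make explicit the bookkeeping that the paper's Cases (iii) and (iv) assert implicitly.
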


\begin{proof}
Assume there are two queues, $Q_1$ and $Q_2$, where a customer is at the same position $i$ in both queues. Queue $Q_1$ contains $n$ customers, while $Q_2$ contains $n+1$ customers. In the coupling setup, the arrival times, service durations, the killing random variable and the Bernouilli random variables governing service success and joining decisions are identical for both queues. The sample paths up to the first service completion fall into four possible cases:

\begin{itemize}
\item Case (i) The process is killed before the tagged customer reaches position 1.
\item Case (ii) The customer reaches position 1, completes service successfully, and exits the system.
\item Case (iii) The customer reaches position 1, but the service fails. She rejoins the queue, and the lengths of $Q_1$ and $Q_2$ are now equal. This situation arises when the number of customers in \( Q_2 \) reaches \( \lfloor x \rfloor + 1 \). Then if another customer arrives, she will join \( Q_1 \) but not \( Q_2 \). This  balances the lengths of the two queues before the tagged customer's service fails and she has to rejoin.
\item Case (iv) The customer reaches position 1, the service fails, and upon rejoining, the queue length of $Q_1$ is one less than that of $Q_2$.
\end{itemize}

In Cases (i)-(iii), the number of visits the tagged customer makes to position 1 in $Q_1$ is equal to that in $Q_2$. Case (iv) corresponds to Case (ivb) in the proof of Lemma~\ref{lemma:1}.

\end{proof}
From Lemma~\ref{lemma:2}, we have
\begin{corollary} \label{corollary:2}
\[
\left( \left(I - P_{\alpha}^{(x)}\right)^{-1} \bm{d} \right)_{i, n} \geq \left( \left(I - P_{\alpha}^{(x)}\right)^{-1} \bm{d} \right)_{i, n+1} \,,
\]
which implies that
\[
\mathbb{E} \left( e^{-\alpha W_{i, n}^{(x)}} \right) \geq \mathbb{E} \left( e^{-\alpha W_{i, n+1}^{(x)}} \right).
\]
\end{corollary}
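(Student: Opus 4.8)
The plan is to read the corollary directly off Lemma~\ref{lemma:2} together with the probabilistic interpretation of the vector $\left(I - P_\alpha^{(x)}\right)^{-1}\bm{d}$ recorded immediately after equation~\eqref{eq:d}; this parallels exactly the way Corollary~\ref{corollary:1} is deduced from Lemma~\ref{lemma:1}, so no new machinery is needed.

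First I would recall that the $(i,j)$th component of $\left(I - P_\alpha^{(x)}\right)^{-1}\bm{d}$ equals $\mathbb{E}\!\left[N^{(x)}_{i,j}\right]$, the expected number of visits to states of the form $(1,\ell)$ before absorption, starting from state $(i,j)$. Lemma~\ref{lemma:2} asserts that $N^{(x)}_{i,n}$ stochastically dominates $N^{(x)}_{i,n+1}$, and since stochastic dominance is preserved under expectation, this yields
\[
\mathbb{E}\!\left[N^{(x)}_{i,n}\right] \;\ge\; \mathbb{E}\!\left[N^{(x)}_{i,n+1}\right],
\]
which is precisely the first displayed inequality of the corollary.

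For the second inequality I would invoke the identity~\eqref{eq:d}, namely $\bm{\gamma}^{(x)}_\alpha = \frac{\mu q}{\alpha+\lambda+\mu}\left(I - P_\alpha^{(x)}\right)^{-1}\bm{d}$, which read component-wise says
\[
\mathbb{E}\!\left[e^{-\alpha W^{(x)}_{i,j}}\right] = \frac{\mu q}{\alpha+\lambda+\mu}\,\mathbb{E}\!\left[N^{(x)}_{i,j}\right].
\]
Because the scalar $\frac{\mu q}{\alpha+\lambda+\mu}$ is strictly positive, multiplying the previous inequality through by it preserves the direction and gives $\mathbb{E}\!\left[e^{-\alpha W^{(x)}_{i,n}}\right] \ge \mathbb{E}\!\left[e^{-\alpha W^{(x)}_{i,n+1}}\right]$.

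There is no substantial obstacle here: all of the genuine content sits in the coupling argument of Lemma~\ref{lemma:2}, and the corollary is essentially a one-line translation of that sample-path comparison into the language of expectations. The only points that warrant a moment's care are, first, confirming that both $(i,n)$ and $(i,n+1)$ are valid transient states of $\mathcal{S}$ — that is, $i \le n$ and $n+1 \le \lfloor x\rfloor+1$, so that the relevant components of $\bm{\gamma}^{(x)}_\alpha$ are defined — and, second, noting that stochastic dominance is being used only to compare means rather than full distributions, which is exactly what the corollary asks for.
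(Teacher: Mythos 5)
Your proposal is correct and follows exactly the paper's route: the authors likewise deduce Corollary~\ref{corollary:2} immediately from the stochastic dominance in Lemma~\ref{lemma:2}, combined with the interpretation (stated after equation~\eqref{eq:d}) of the components of $\left(I - P_{\alpha}^{(x)}\right)^{-1}\bm{d}$ as expected visit counts to the states $(1,\ell)$, and the positive scaling $\frac{\mu q}{\alpha+\lambda+\mu}$ linking these to the expected discounted rewards. Your added remarks on state validity and on using dominance only at the level of means are sound but not needed beyond what the paper leaves implicit.
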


Using similar reasoning as in the proof of \cite[Lemma 2]{FTW21}, it follows from Corollaries \ref{corollary:1} and \ref {corollary:2} that
\[
\mathbb{E}\left(e^{-\alpha W_{i,j}^{(x_1)}} \right) \geq \mathbb{E}\left(e^{-\alpha W_{i,j}^{(x_2)}} \right) \quad \text{for } x_1 \leq x_2.
\]
We now return to interpreting \(\alpha\) as a discount factor. To simplify our notation, we define
$$
z_{i,j}^{(x)}(\alpha) :=  \mathbb{E}\left( \,e^{-\alpha W_{i,j}^{(x)}(\alpha)}\right) -v.
$$
The following lemma will be useful for establishing the uniqueness of the Nash equilibrium.
\begin{lemma}
If
$$
z_{m+1,m+1}^{(m+1)} (\alpha) < 0 < z_{m+1,m+1}^{(m)}(\alpha),
$$
then there exists a unique $x \in (m, m+1)$ such that
$$
z_{m+1,m+1}^{(x)}(\alpha) = 0.
$$
\end{lemma}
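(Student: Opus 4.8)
The plan is to prove existence by the intermediate value theorem and uniqueness by strict monotonicity, after first reducing the statement to a question about a single real parameter. On the interval $[m,m+1]$ the integer part $\lfloor x\rfloor = m$ is constant, so I would parametrise by $p = x-m \in [0,1]$ and work throughout in the fixed state space $\mathcal{S}$ associated with $\lfloor x \rfloor = m$, writing $f(p) := z_{m+1,m+1}^{(x)}(\alpha)$. Before anything else I would check that the two endpoint values in the hypothesis are the genuine endpoints of this one-parameter family: at $p=0$ the strategy is exactly $\mathcal{T}(m)$, and in the limit $p \to 1$ the joining probabilities $u^{(x)}(i)$ coincide with those of $\mathcal{T}(m+1)$, since from the initial state $(m+1,m+1)$ the level can never exceed $m+1$ and the extra state available when $\lfloor x\rfloor = m+1$ is unreachable. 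Hence $f(0) = z_{m+1,m+1}^{(m)}(\alpha) > 0$ and $f(1) = z_{m+1,m+1}^{(m+1)}(\alpha) < 0$.

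Next I would establish that $f$ is continuous, indeed real-analytic, on $[0,1]$. The entries of $P_\alpha^{(x)}$ that depend on the parameter are exactly the arrival transitions out of level $m$, which enter affinely as $\lambda p/(\lambda+\mu+\alpha)$ and $\lambda(1-p)/(\lambda+\mu+\alpha)$; all other entries are constant in $p$. Thus $I - P_\alpha^{(x)}$ is an affine matrix-valued function of $p$, and by the Perron–Frobenius argument already given it is nonsingular for every $x$, so $\det(I-P_\alpha^{(x)})$ is a polynomial in $p$ that does not vanish on $[0,1]$. Solving the linear system \eqref{eq:EPoisson} by Cramer's rule then exhibits the component $f(p)$ as a ratio of polynomials in $p$ with non-vanishing denominator, hence as a real-analytic function of $p$ on $[0,1]$.

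For monotonicity I would invoke the results already in hand: Corollaries~\ref{corollary:1} and \ref{corollary:2}, together with the monotonicity in $x$ established immediately after Corollary~\ref{corollary:2}, give $\mathbb{E}(e^{-\alpha W_{m+1,m+1}^{(x_1)}}) \geq \mathbb{E}(e^{-\alpha W_{m+1,m+1}^{(x_2)}})$ whenever $x_1 \le x_2$, so $f$ is weakly decreasing on $[0,1]$. The crucial upgrade is from weak to strict monotonicity, and this is the step I expect to be the main obstacle. I would obtain it cheaply from analyticity: a real-analytic, weakly decreasing function that is constant on some subinterval must be constant throughout, by the identity theorem; since $f(0) > 0 > f(1)$, $f$ is not constant, and therefore $f$ is strictly decreasing on $[0,1]$. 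An alternative, more hands-on route would sharpen the coupling of Lemma~\ref{lemma:1}, showing that ``Case (iv(b))'', in which the larger queue strictly delays the tagged customer, occurs with positive probability and hence produces a strict gap in the expected number of visits to the service states and so in $f$; I would keep the analyticity argument as the primary one, since it avoids re-examining the coupling.

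Finally, combining continuity with $f(0) > 0 > f(1)$, the intermediate value theorem yields some $p^\ast \in (0,1)$ with $f(p^\ast) = 0$, so that $x = m + p^\ast \in (m,m+1)$ solves $z_{m+1,m+1}^{(x)}(\alpha) = 0$; strict monotonicity of $f$ forces this root to be unique.
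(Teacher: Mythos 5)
Your proof is correct, and it reaches the conclusion by a genuinely different route from the paper's. The paper dispatches this lemma in one sentence: having shown $I - P_{\alpha}^{(x)}$ is nonsingular, it asserts that the argument of \citet[Lemma 3]{FTW21} ``can be adapted'' to show $z_{m+1,m+1}^{(x)}(\alpha)$ is continuous and strictly decreasing on $(m,m+1)$, after which existence and uniqueness follow exactly as in your last step. So the skeleton (intermediate value theorem plus strict monotonicity) is shared, but everything of substance in your write-up is your own and is self-contained where the paper defers to an external computation. In particular: your endpoint identification (that $f(1) = z_{m+1,m+1}^{(m+1)}(\alpha)$ because the level-$(m+2)$ states are unreachable from $(m+1,m+1)$ when the joining probability at level $m+1$ is zero) is left implicit in the paper; your Cramer's-rule observation that $f$ is a rational, hence real-analytic, function of $p$ with nonvanishing denominator replaces the adapted continuity argument; and, most notably, your upgrade from the weak monotonicity of Corollaries~\ref{corollary:1} and~\ref{corollary:2} to strict monotonicity via the identity theorem sidesteps both re-deriving the \citet{FTW21} computation (which in effect differentiates the linear system in $p$ and needs a separate strictness argument there) and sharpening the coupling in Lemma~\ref{lemma:1}. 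What your approach buys is a clean, checkable proof from results already in the paper; what the paper's approach buys is brevity and uniformity with its predecessor. One small boundary point worth patching: the paper's Perron--Frobenius nonsingularity claim is stated for matrices $P_{\alpha}^{(x)}$ with $p = x - \lfloor x \rfloor \in [0,1)$, so your value $p=1$ is not literally covered by it; note instead that at $p=1$ the matrix is still substochastic with every row sum at most $(\lambda+\mu)/(\lambda+\mu+\alpha) < 1$ whenever $\alpha > 0$, so its spectral radius is below one and nonsingularity holds, while for $\alpha = 0$ the lemma's hypothesis is vacuous since $z_{m+1,m+1}^{(x)}(0) = 1 - v$ is constant in $x$. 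With that remark, $f$ is analytic on a neighborhood of all of $[0,1]$ and your identity-theorem step is fully justified.
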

\begin{proof}
Since $I - P_{\alpha}^{(x)}$ is non-singular, the argument of \citet[Lemma 3]{FTW21} can be adapted to show that $z_{m+1,m+1}^{(x)}(\alpha)$ is continuous and strictly decreasing in $x \in (m, m+1)$, completing the proof.
\end{proof}

\begin{theorem} \label{theorem: NaEx}
The Nash equilibrium threshold $x_e$ is given by
$$
x_e =
\begin{dcases}
0, & \text{if } z_{1,1}^{(0)} (\alpha)< 0, \\[6pt]
\zeta_0, & \text{if } z_{1,1}^{(0)}(\alpha) = 0, \\[6pt]
m, & \text{if } z_{m+1,m+1}^{(m)}(\alpha) \leq 0 \leq z_{m,m}^{(m)}(\alpha), \quad m = 1, 2, \ldots, \\[6pt]
\chi_m, & \text{if } z_{m+1,m+1}^{(m+1)}(\alpha) < 0 < z_{m+1,m+1}^{(m)}(\alpha), \quad m = 1, 2, \ldots,
\end{dcases}
$$
where $\zeta_0$ is any number in $[0,1]$, and $\chi_m := \{ x : z_{m+1,m+1}^{(x)}(\alpha) = 0 \}$.
\end{theorem}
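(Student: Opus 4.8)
The plan is to characterise $\mathcal{T}(x_e)$ as a fixed point of the best-response map and to verify the fixed-point condition in each of the four regimes. Fix the others' threshold at $x$. A tagged arrival who finds $i-1$ customers present would occupy position $i$ and collect expected net payoff $z_{i,i}^{(x)}(\alpha)$, so her optimal action at that position is to join when $z_{i,i}^{(x)}(\alpha)>0$, to balk when $z_{i,i}^{(x)}(\alpha)<0$, and to be indifferent when $z_{i,i}^{(x)}(\alpha)=0$. Because her payoff at a given position does not depend on her own declared threshold, her best response decomposes position by position, and $\mathcal{T}(x)$ is a symmetric Nash equilibrium precisely when, under others' threshold $x$, joining is optimal at every position $i\le\lfloor x\rfloor$, balking is optimal at every position $i\ge\lfloor x\rfloor+2$, and the marginal position $i=\lfloor x\rfloor+1$ is treated consistently with the mixing probability $p=x-\lfloor x\rfloor$ (indifference when $0<p<1$, non-beneficial joining when $p=0$).

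The two monotonicity facts established above do almost all of the work. Corollary~\ref{corollary:1} states that $z_{i,i}^{(x)}(\alpha)$ is non-increasing in the position $i$ for fixed $x$; this guarantees both that the best response is itself a threshold strategy (once joining ceases to be beneficial it stays non-beneficial at all larger positions) and that it suffices to control the sign of $z_{i,i}^{(x)}(\alpha)$ at a single critical position. The combined bound $\mathbb{E}(e^{-\alpha W_{i,j}^{(x_1)}})\ge\mathbb{E}(e^{-\alpha W_{i,j}^{(x_2)}})$ for $x_1\le x_2$ shows that the payoff is also non-increasing in the others' threshold, and the Lemma preceding the theorem supplies continuity and strict monotonicity of $x\mapsto z_{m+1,m+1}^{(x)}(\alpha)$ on each interval $(m,m+1)$. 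Together these let me track the sign of the marginal payoff as $x$ sweeps across $[0,\infty)$.

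With these tools the case verification is short. In the integer case $x_e=m$, the hypotheses $z_{m,m}^{(m)}(\alpha)\ge0\ge z_{m+1,m+1}^{(m)}(\alpha)$ combined with monotonicity in $i$ give $z_{i,i}^{(m)}(\alpha)\ge z_{m,m}^{(m)}(\alpha)\ge0$ for $i\le m$ and $z_{i,i}^{(m)}(\alpha)\le z_{m+1,m+1}^{(m)}(\alpha)\le0$ for $i\ge m+1$, so joining up to $m$ and balking beyond $m$ is exactly the best response. In the mixed case the preceding Lemma yields the unique $\chi_m\in(m,m+1)$ with $z_{m+1,m+1}^{(\chi_m)}(\alpha)=0$; monotonicity in $i$ then forces $z_{i,i}^{(\chi_m)}(\alpha)\ge0$ for $i\le m$ and $z_{i,i}^{(\chi_m)}(\alpha)\le0$ for $i\ge m+2$, while the marginal position $m+1$ is exactly indifferent, which is precisely what is needed to sustain the mixing probability $p=\chi_m-m\in(0,1)$. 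The two remaining lines cover the empty-system boundary: if $z_{1,1}^{(0)}(\alpha)<0$ no arrival ever benefits from joining, so $x_e=0$; and since $z_{1,1}^{(x)}(\alpha)$ is constant for $x\in[0,1]$ (no other customer joins while the tagged one is present), $z_{1,1}^{(0)}(\alpha)=0$ makes every $x\in[0,1]$ an indifferent best response, giving the arbitrary $\zeta_0$.

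The main obstacle is completeness: showing that the four listed regimes are exhaustive and, up to the free parameter $\zeta_0$, mutually exclusive, so that exactly one value $x_e$ is produced. This amounts to tracking the two threshold-indexed sequences $z_{m+1,m+1}^{(m)}(\alpha)$ and $z_{m+1,m+1}^{(m+1)}(\alpha)$ and arguing that, as $m$ increases, the marginal payoff changes sign exactly once, with the crossing realised either at an integer (the third case) or strictly inside an interval (the fourth case). Beyond the monotonicity already quoted this needs two ingredients: that the marginal payoff is eventually negative, which holds because a customer forced to wait behind arbitrarily many others has her discounted reward driven to $0$, so $z_{m+1,m+1}^{(m)}(\alpha)\to -v<0$ and the equilibrium threshold is finite; and that the ordering $z_{m+1,m+1}^{(m+1)}(\alpha)\le z_{m+1,m+1}^{(m)}(\alpha)$, from the combined monotonicity in $x$, prevents the boundary conditions of adjacent cases from overlapping. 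Assembling these sign relations into a clean partition of parameter space is the delicate bookkeeping step, but it draws on no analytic input beyond Corollaries~\ref{corollary:1}--\ref{corollary:2} and the uniqueness Lemma.
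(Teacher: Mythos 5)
Your proposal is correct and follows essentially the same route as the paper: a case-by-case verification that each listed threshold is a best response to itself, using the indifference argument for $\zeta_0$ and $\chi_m$ (via the preceding uniqueness lemma) exactly as the paper does, with the monotonicity of $z_{i,i}^{(x)}(\alpha)$ in $i$ and in $x$ making the position-by-position checks explicit. Your closing discussion of exhaustiveness (the interleaved non-increasing sequence $z_{m+1,m+1}^{(m)}(\alpha) \geq z_{m+1,m+1}^{(m+1)}(\alpha) \geq z_{m+2,m+2}^{(m+1)}(\alpha)$ together with eventual negativity, which requires $\alpha>0$ and $v>0$) is sound but goes beyond the theorem's claim — the paper's proof does not attempt it.
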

\begin{proof}
    \begin{itemize}
    \item  If $z_{1,1}^{(0)} (\alpha)< 0$, even when no customers are in the system, it is not optimal for the tagged customer to join the system. Hence, not joining is the best response for any customer given that this is used by the other customers . 
    \item If \( z_{1,1}^{(0)} (\alpha)= 0 \), then \( z_{1,1}^{(\zeta_0)}(\alpha) = 0 \) for all \( \zeta_0 \in [0,1] \). That is, given that others use a threshold \( \zeta_0 \), the tagged customer is indifferent between joining and balking. Thus, any threshold in \( [0,1] \) is a best response, and hence any value in \( [0,1] \) constitutes a Nash equilibrium threshold.
    \item If \( z_{m+1,m+1}^{(m)}(\alpha) \leq 0 \leq z_{m,m}^{(m)}(\alpha) \), when all other customers use a threshold \( m \), it is optimal for the tagged customer to do the same. Therefore, threshold \( m \) is a Nash equilibrium.
    \item If $z_{m+1,m+1}^{(m+1)}(\alpha) < 0 < z_{m+1,m+1}^{(m)}(\alpha)$, then the best response of the tagged customer is to choose a threshold greater than $m$ when the other customers use threshold $m$, and to choose a threshold less than $m+1$ when the other customers use threshold $m+1$. Moreover, when all others adopt threshold $\chi_m$, the tagged customer is indifferent between joining at position $m$ or $m+1$. Therefore, any threshold in the interval $[m, m+1]$, including $\chi_m$, is a best response. Hence, $\chi_m$ is a Nash equilibrium threshold.
    \end{itemize}
\end{proof}

\section{The Case Where Customers Can Renege} \label{sec:ExRcase}

In this section, we examine customers' strategic behavior when they are allowed to renege after joining the system. Specifically, arriving customers observe the current number of customers and decide whether to join based on a threshold strategy. If their service attempt fails, they return to the end of the queue and make a new joining decision using the same threshold rule. When the other customers follow threshold $x$, there can be no more than $\lfloor x \rfloor + 1$ other customers in the queue when the tagged customer arrives, and so it suffices to compute the expected payoff of a tagged customer who joins in positions $1 \leq i \leq \lfloor x \rfloor + 1$. In fact, it turns out that the value of $i$ that is relevant for calculating the Nash equilibrium is $i = \lfloor x \rfloor + 1$. For further details, see \citet[Section 4]{FTW21}.

In the $R$-case, let $\widehat{W}_{i,j}^{(\lfloor x \rfloor + 1, x)}$ denote the remaining time until the tagged customer either successfully completes service or reneges, given that there are $j$ customers in the system, the tagged customer is in position $i$ using threshold $\lfloor x \rfloor + 1$, and all other customers follow a threshold strategy with threshold $x$. The corresponding expected payoff is defined by
\[
\widehat{z}_{i,i}^{(\lfloor x \rfloor +1, x)}(\alpha) := \mathbb{E}\left( \, e^{-\alpha \widehat{W}_{i,i}^{(\lfloor x \rfloor +1, x)}}\right) - v.
\]
For $t \geq 0$, let $\widehat{X}(t)$ and $\widehat{T}(t)$ denote the analogues of $X(t)$ and $T(t)$ in the $N$-case. Then, for $1 \leq i \leq j \leq \lfloor x \rfloor + 1$, the conditional probability
{\footnotesize
\begin{align}
&P\{\widehat{X}(\widehat{T}(t)) = (i',j')  \mid \widehat{X}(t) = (i,j)\}  \\
&\qquad= \begin{dcases}
\frac{\lambda}{\lambda+\mu} & \text{if} \quad j < \lfloor x \rfloor, \, (i',j') = (i,j+1) \text{ or } j > \lfloor x \rfloor, \, (i',j') = (i,j) \\
\frac{\lambda p}{\lambda+\mu} & \text{if} \quad j = \lfloor x \rfloor, \, (i',j') = (i,j+1) \\
\frac{\lambda (1-p)}{\lambda+\mu} & \text{if} \quad  j = \lfloor x \rfloor, \, (i',j') = (i,j) \\
\frac{\mu q}{\lambda+\mu} & \text{if} \quad  i >1, \, j < \lfloor x \rfloor +1, \,  (i',j') = (i-1,j-1) \text{ or } i =1, \, (i',j') = 0 \\
\frac{\mu (1-q)}{\lambda+\mu} & \text{if} \quad  i >1, \, j < \lfloor x \rfloor +1, \, (i',j') = (i-1,j) \text{ or }  i =1, \, (i',j') = (j,j) \\
\frac{\mu (q + (1-q)(1-p))}{\lambda+\mu} & \text{if} \quad  i >1, \, j = \lfloor x \rfloor +1, \,  (i',j') = (i-1,j-1) \\
\frac{\mu (1-q)p}{\lambda+\mu} & \text{if} \quad  i >1, \, j = \lfloor x \rfloor +1, \, (i',j') = (i-1,j) \,.
\end{dcases} \notag
\end{align}}
Similar to Equation \eqref{eq:Wij}, we assume that the system is in state $(i,j)$ at time $t$. Conditioning on the first transition out of state $(i,j)$, the expected discount factor $\mathbb{E}\left(e^{-\alpha \widehat{W}_{i,i}^{(\lfloor x \rfloor +1, x)}}\right)$ can be expressed as in Appendix \ref{appendix:wijR}.

Hence, the vector of expected discount factors
\begin{equation} \label{eq:totalDisFactor2}
\bm{\widehat{\gamma}}^{(\lfloor x \rfloor+1, x)}_{\alpha} \equiv \left[ \mathbb{E} \left(e^{-\alpha\widehat{W}^{(\lfloor x \rfloor+1, x)}_{1,1}} \right), \mathbb{E} \left(e^{-\alpha\widehat{W}^{(\lfloor x \rfloor+1, x)}_{1,2}} \right),\mathbb{E} \left(e^{-\alpha\widehat{W}^{(\lfloor x \rfloor+1, x)}_{2,2}} \right), \ldots, \mathbb{E} \left(e^{-\alpha\widehat{W}^{(\lfloor x \rfloor+1, x)}_{\lfloor x \rfloor+1,\lfloor x \rfloor+1}} \right) \right]^T
\end{equation}
can be obtained by solving Poisson's equation
\begin{equation} \label{eq:EPoisson2}
\left(I - \widehat{P}^{(\lfloor x \rfloor +1,x)}_\alpha\right) \bm{\widehat{\gamma}^{(\lfloor x \rfloor+1, x)}_{\alpha}} = \bm{g}_\alpha
\end{equation}
where $\widehat{P}^{(\lfloor x \rfloor +1,x)}_\alpha$ and $\bm{g}_\alpha$ are defined in Appendix \ref{appendix: 2.1} and Equation \eqref{eq:g}, respectively.
The algorithm for solving Equation \eqref{eq:EPoisson2} is nearly identical to Algorithm \ref{alg:Poisson}, with the only difference being that the matrices $U^{j}, \Gamma^{j}, G^{(j)}$ are now derived from $\widehat{P}^{(\lfloor x \rfloor +1,x)}_\alpha$.

Next, we examine how allowing reneging affects customer decisions. The first step is to compare ${z}_{j,j}^{(x)}(\alpha)$ and $\widehat{z}_{j,j}^{(\lfloor x \rfloor +1, x)}(\alpha)$.
\begin{lemma} \label{lemma:ECompareZ}
For all thresholds $x$ and for all joining positions $j$, the expected payoff in the $R$-case is at least as high as in the $N$-case, that is
\begin{equation}
\widehat{z}_{j,j}^{(\lfloor x \rfloor +1, x)}(\alpha) \geq z_{j,j}^{(x)}(\alpha) \qquad j = 1, 2, \ldots, \lfloor x \rfloor+1 \,.
\end{equation}
When $x$ is an integer, 	
\begin{equation}
\widehat{z}_{j,j}^{(x, x)}(\alpha)  = \widehat{z}_{j,j}^{(x +1, x)}(\alpha) = z_{j,j}^{(x)}(\alpha) \qquad j = 1, 2, \ldots, \lfloor x \rfloor \,.
\end{equation}
\end{lemma}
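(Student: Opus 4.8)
The plan is to reduce the claim to a comparison of the two expected discount factors and then exploit the fact that the $R$-case and $N$-case chains differ in only a handful of transitions. Since both payoffs subtract the same admission fee $v$, the inequality $\widehat{z}_{j,j}^{(\lfloor x \rfloor +1, x)}(\alpha) \geq z_{j,j}^{(x)}(\alpha)$ is equivalent to $\mathbb{E}(e^{-\alpha \widehat{W}_{j,j}^{(\lfloor x \rfloor +1, x)}}) \geq \mathbb{E}(e^{-\alpha W_{j,j}^{(x)}})$. By the representation in \eqref{eq:d}, each discount factor equals $\frac{\mu q}{\alpha+\lambda+\mu}$ times the corresponding component of $(I-P_{\alpha}^{(x)})^{-1}\bm{d}$ (respectively $(I-\widehat{P}^{(\lfloor x\rfloor+1,x)}_\alpha)^{-1}\bm{d}$), i.e.\ the expected number of visits to the position-$1$ states before absorption. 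Writing $h := (I-P_{\alpha}^{(x)})^{-1}\bm{d}$, it therefore suffices to prove the componentwise bound $(I-\widehat{P}^{(\lfloor x\rfloor+1,x)}_\alpha)^{-1}\bm{d} \geq (I-P_{\alpha}^{(x)})^{-1}\bm{d}$.

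First I would compare the two transition matrices directly from their definitions. With the tagged customer using the integer threshold $\lfloor x \rfloor +1$ she never reneges, so her own transitions coincide with the $N$-case; the only discrepancy occurs in the rows indexed by $(i,\lfloor x\rfloor+1)$ with $i>1$, where a non-tagged customer at the head of a full queue reneges with probability $1-p$. Comparing the two transition lists, $\widehat{P}^{(\lfloor x\rfloor+1,x)}_\alpha$ moves an extra mass $\frac{\mu(1-q)(1-p)}{\lambda+\mu+\alpha}$ from the same-level target $(i-1,\lfloor x\rfloor+1)$ to the lower-level target $(i-1,\lfloor x\rfloor)$. Hence $\widehat{P}^{(\lfloor x\rfloor+1,x)}_\alpha - P_{\alpha}^{(x)}$ has nonzero entries only in these rows, and the resolvent identity gives
\begin{equation*}
\left(I-\widehat{P}^{(\lfloor x\rfloor+1,x)}_\alpha\right)^{-1}\bm{d} - \left(I-P_{\alpha}^{(x)}\right)^{-1}\bm{d} = \left(I-\widehat{P}^{(\lfloor x\rfloor+1,x)}_\alpha\right)^{-1}\left(\widehat{P}^{(\lfloor x\rfloor+1,x)}_\alpha - P_{\alpha}^{(x)}\right)h.
\end{equation*}
The key step is to show $(\widehat{P}^{(\lfloor x\rfloor+1,x)}_\alpha - P_{\alpha}^{(x)})h \geq 0$: its $(i,\lfloor x\rfloor+1)$-entry equals $\frac{\mu(1-q)(1-p)}{\lambda+\mu+\alpha}(h_{i-1,\lfloor x\rfloor}-h_{i-1,\lfloor x\rfloor+1})$, which is nonnegative by Corollary~\ref{corollary:2}, and all other entries vanish. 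Since $\widehat{P}^{(\lfloor x\rfloor+1,x)}_\alpha$ is nonnegative with spectral radius below $1$ (the Perron--Frobenius argument used for $P_{\alpha}^{(x)}$ applies verbatim), its resolvent $\sum_{k\geq0}(\widehat{P}^{(\lfloor x\rfloor+1,x)}_\alpha)^k$ is a nonnegative matrix, so the right-hand side is nonnegative componentwise. This yields the required inequality at every component, in particular at $(j,j)$, proving the first statement.

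For the integer case I would argue by unreachability. When $x\in\mathbb{Z}$ we have $p=0$, so an arriving non-tagged customer balks as soon as the system holds $x$ customers, and a tagged customer starting at a position $j\leq x$ never rejoins into a position exceeding $x$. Consequently, whether the tagged customer uses threshold $x$ or $\lfloor x\rfloor+1=x+1$, and whether or not reneging is permitted, the level $x+1$ is never visited from any initial state $(j,j)$ with $j\leq x$. The three chains therefore agree on the reachable set of states, and the three discount factors coincide, giving $\widehat{z}_{j,j}^{(x,x)}(\alpha)=\widehat{z}_{j,j}^{(x+1,x)}(\alpha)=z_{j,j}^{(x)}(\alpha)$ for $j\leq \lfloor x\rfloor$.

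I expect the only delicate point to be the sign of $(\widehat{P}^{(\lfloor x\rfloor+1,x)}_\alpha - P_{\alpha}^{(x)})h$, which is exactly where Corollary~\ref{corollary:2} enters: one must compare the $N$-case value $h$ at the two target states $(i-1,\lfloor x\rfloor)$ and $(i-1,\lfloor x\rfloor+1)$, i.e.\ quantify the benefit to the tagged customer of the reneging customer shortening the queue. An equivalent and perhaps more transparent route is a direct coupling of the two queues on a common probability space---sharing arrivals, service and killing clocks, and the success and joining Bernoulli variables---in which the $R$-case queue length never exceeds the $N$-case queue length, so the tagged customer reaches the head at least as often before killing; the obstacle there is maintaining this length ordering through arrivals' joining decisions once the two lengths diverge, which would be handled by the same recursive mirroring as in Cases (iv) of Lemmas~\ref{lemma:1} and \ref{lemma:2}.
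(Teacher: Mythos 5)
Your proof follows essentially the same route as the paper's: your resolvent identity is exactly the paper's subtraction of the two Poisson equations (up to the scalar $\mu q/(\alpha+\lambda+\mu)$ relating $h$ to $\bm{\gamma}^{(x)}_\alpha$), you identify $\widehat{P}^{(\lfloor x\rfloor+1,x)}_\alpha - P^{(x)}_\alpha$ as supported on the rows $(i,\lfloor x\rfloor+1)$, $i>1$, with mass $\mu(1-q)(1-p)/(\lambda+\mu+\alpha)$ shifted from $(i-1,\lfloor x\rfloor+1)$ to $(i-1,\lfloor x\rfloor)$, and you conclude via Corollary~\ref{corollary:2} together with nonnegativity of $\bigl(I-\widehat{P}^{(\lfloor x\rfloor+1,x)}_\alpha\bigr)^{-1}$, precisely as in the paper. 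The only divergence is that for integer $x$ you supply an explicit (and correct) unreachability argument — with $p=0$ the level $\lfloor x\rfloor+1$ is never visited from $(j,j)$ with $j\leq\lfloor x\rfloor$, so all three chains coincide on the reachable states — where the paper instead defers to the argument of \citet[Lemma 4]{FTW21}.
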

\begin{proof}
Equations \eqref{eq:EPoisson} and \eqref{eq:EPoisson2} give us
\begin{align*}
\left(I-P_{\alpha}^{(x)}\right) \, \bm{\gamma}^{(x)}_\alpha & = \bm{g}_\alpha \\
\left(I-\widehat{P}_{\alpha}^{(\lfloor x \rfloor +1, x)}\right) \, \bm{\widehat{\gamma}}^{(\lfloor x \rfloor+1, x)}_{\alpha}& = \bm{g}_\alpha \,.
\end{align*}
Rearranging terms, we obtain:
\begin{align}
\left(I - \widehat{P}_{\alpha}^{(\lfloor x \rfloor +1, x)}\right) \, \left( \bm{\widehat{\gamma}}^{(\lfloor x \rfloor+1, x)}_{\alpha}- \bm{\gamma}^{(x)}_\alpha\right) & = \left(\widehat{P}_{\alpha}^{(\lfloor x \rfloor +1, x)} - P_\alpha^{(x)} \right) \, \bm{\gamma}^{(x)}_\alpha \\
& = \frac{\mu (1-q) (1-p)}{\lambda+\mu+\alpha}
\begin{bmatrix}
    \bm{0}_{\frac{n(n+1)}{2}+ 1} \\
	\mathbb{E} \left(e^{-\alpha W_{1,n}}\right) - \mathbb{E} \left(e^{-\alpha W_{1,n+1}}\right) \\
	\mathbb{E} \left(e^{-\alpha W_{2,n}}\right) - \mathbb{E} \left(e^{-\alpha W_{2,n+1}}\right) \\
	\vdots \\
	\mathbb{E} \left(e^{-\alpha W_{n,n}}\right) - \mathbb{E} \left(e^{-\alpha W_{n,n+1}}\right) \\
\end{bmatrix}
\end{align}
and so 
\begin{equation}
\label{eq:gammadiff}
\left( \bm{\widehat{\gamma}}^{(\lfloor x \rfloor+1, x)}_{\alpha}- \bm{\gamma}^{(x)}_\alpha\right) =  \frac{\mu (1-q) (1-p)}{\lambda+\mu+\alpha} \left(I - \widehat{P}_{\alpha}^{(\lfloor x \rfloor +1, x)}\right)^{-1}
\begin{bmatrix}
    \bm{0}_{\frac{n(n+1)}{2}+ 1} \\
	\mathbb{E} \left(e^{-\alpha W_{1,n}}\right) - \mathbb{E} \left(e^{-\alpha W_{1,n+1}}\right) \\
	\mathbb{E} \left(e^{-\alpha W_{2,n}}\right) - \mathbb{E} \left(e^{-\alpha W_{2,n+1}}\right) \\
	\vdots \\
	\mathbb{E} \left(e^{-\alpha W_{n,n}}\right) - \mathbb{E} \left(e^{-\alpha W_{n,n+1}}\right) \\
\end{bmatrix}.
\end{equation}
The $((i_1,j_1),(i_2,j_2))$-th entry of $\left(I - \widehat{P}_{\alpha}^{(\lfloor x \rfloor +1, x)}\right)^{-1}$ represents the expected number of visits to state $(i_2,j_2)$ starting from state $(i_1,i_2)$ before the tagged customer exits the system or the killing random variable stops the process. We can conclude from this that $\left(I - \widehat{P}_{\alpha}^{(\lfloor x \rfloor +1, x)}\right)^{-1} >0$. Furthermore, it follows from Corollary \ref{corollary:2} that the column on the right hand side of (\ref{eq:gammadiff}) is nonnegative. It follows that
\[
\left( \bm{\widehat{\gamma}}^{(\lfloor x \rfloor+1, x)}_{\alpha}-  \bm{\gamma}^{(x)}(\alpha)\right) \geq 0 \,.
\]
Consequently, for all $j = 1, 2, \ldots, \lfloor x \rfloor+1$, we have $\widehat{z}_{j,j}^{(\lfloor x \rfloor +1, x)}(\alpha) \geq z_{j,j}^{(x)}(\alpha)$.

When $x$ is an integer, the proof follows similarly to that of \citet[Lemma 4]{FTW21}, and we omit the details here.
\end{proof}

\begin{theorem} \label{theorem: ExNaCom}
	The Nash equilibrium $\widehat{x}_e$ in the $R$-case is 
	\begin{equation}
	\widehat{x}_e = 
	\begin{dcases}
	0 & \text{if } \, \widehat{z}_{1,1}^{(1, 0)} (\alpha) < 0 \\
	\zeta_0 & \text{if } \, \widehat{z}_{1,1}^{(1, 0)} (\alpha) = 0 \\
	m & \text{if } \, \widehat{z}_{m+1,m+1}^{(m+1, m)} (\alpha) \leq 0 \leq   \widehat{z}_{m,m}^{(m, m)}(\alpha) \\
	\widehat{\chi}_m & \text{if } \, \widehat{z}_{m+1,m+1}^{(m+1, m+1) } (\alpha)< 0 < \widehat{z}_{m+1,m+1}^{(m+1, m)} (\alpha)\,,
	\end{dcases}
	\end{equation}
        where $\widehat{\chi}_m := \{ x : \widehat{z}_{m+1,m+1}^{(m+1, x)} (\alpha) = 0 \}$.
\end{theorem}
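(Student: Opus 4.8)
The plan is to prove Theorem~\ref{theorem: ExNaCom} by following exactly the same case-analysis skeleton used in the proof of Theorem~\ref{theorem: NaEx}, substituting the $R$-case payoff $\widehat{z}$ for the $N$-case payoff $z$ throughout. The logical engine is identical: a threshold $\widehat{x}_e$ is a Nash equilibrium precisely when, given that all other customers play $\widehat{x}_e$, the tagged customer's best response is again $\widehat{x}_e$. Since the tagged customer decides to join based on the sign of her expected payoff, and since by \citet[Section 4]{FTW21} the relevant joining position for the Nash calculation is $i=\lfloor x\rfloor+1$, the equilibrium conditions are phrased in terms of the sign of $\widehat{z}_{m+1,m+1}^{(m+1,x)}(\alpha)$ and $\widehat{z}_{m,m}^{(m,m)}(\alpha)$.

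First I would handle the four cases in turn. For the boundary cases, if $\widehat{z}_{1,1}^{(1,0)}(\alpha)<0$ then even an empty system does not yield a positive payoff, so balking is the unique best response and $\widehat{x}_e=0$; if $\widehat{z}_{1,1}^{(1,0)}(\alpha)=0$ the tagged customer is indifferent and any $\zeta_0\in[0,1]$ is an equilibrium. For the pure-integer case, if $\widehat{z}_{m+1,m+1}^{(m+1,m)}(\alpha)\le 0\le \widehat{z}_{m,m}^{(m,m)}(\alpha)$, then when all others use threshold $m$ it is optimal for the tagged customer to join at positions up to $m$ but not at $m+1$, so threshold $m$ is a best response to itself and hence an equilibrium. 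For the fractional case, if $\widehat{z}_{m+1,m+1}^{(m+1,m+1)}(\alpha)<0<\widehat{z}_{m+1,m+1}^{(m+1,m)}(\alpha)$, a continuity-and-monotonicity argument produces an intermediate $\widehat{\chi}_m\in(m,m+1)$ at which $\widehat{z}_{m+1,m+1}^{(m+1,\widehat{\chi}_m)}(\alpha)=0$, making the tagged customer indifferent and so establishing $\widehat{\chi}_m$ as an equilibrium.

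The step that requires genuine care, and which I expect to be the main obstacle, is the fractional case, because it needs an analogue of the lemma preceding Theorem~\ref{theorem: NaEx}: namely that $\widehat{z}_{m+1,m+1}^{(m+1,x)}(\alpha)$ is continuous and strictly decreasing in $x$ on $(m,m+1)$. In the $N$-case this followed from adapting \citet[Lemma 3]{FTW21} using the non-singularity of $I-P_\alpha^{(x)}$. I would verify that the corresponding matrix $I-\widehat{P}_\alpha^{(\lfloor x\rfloor+1,x)}$ is likewise non-singular (by the same Perron--Frobenius argument, since $\widehat{P}_\alpha^{(\lfloor x\rfloor+1,x)}$ is substochastic with at least one deficient row), so that $\widehat{\bm{\gamma}}_\alpha^{(\lfloor x\rfloor+1,x)}=(I-\widehat{P}_\alpha^{(\lfloor x\rfloor+1,x)})^{-1}\bm{g}_\alpha$ depends continuously on the entries of $\widehat{P}_\alpha^{(\lfloor x\rfloor+1,x)}$, which in turn depend continuously (indeed affinely) on $p=x-m$. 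Monotonicity should follow from the same stochastic-domination reasoning that underlies Corollaries~\ref{corollary:1} and~\ref{corollary:2}: increasing $x$ increases the effective queue length others generate, which stochastically increases the tagged customer's sojourn time and hence decreases $\mathbb{E}(e^{-\alpha\widehat{W}})$.

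Once that monotonicity-and-continuity lemma is in place, the intermediate value theorem delivers a unique $\widehat{\chi}_m$, and the best-response verification is immediate: for others playing $m$ the tagged customer strictly prefers to join at $m+1$ (since $\widehat{z}_{m+1,m+1}^{(m+1,m)}(\alpha)>0$), for others playing $m+1$ she strictly prefers not to (since $\widehat{z}_{m+1,m+1}^{(m+1,m+1)}(\alpha)<0$), and at $\widehat{\chi}_m$ she is indifferent, so any mixing probability $p=\widehat{\chi}_m-m$ at position $m+1$ is a best response, confirming $\widehat{\chi}_m$ as the equilibrium. I would therefore present the proof compactly, citing the $N$-case argument of Theorem~\ref{theorem: NaEx} for the structural steps and emphasizing only the two places where the $R$-case differs: the form of the transition matrix $\widehat{P}_\alpha^{(\lfloor x\rfloor+1,x)}$ and the reuse of Lemma~\ref{lemma:ECompareZ} together with Corollary~\ref{corollary:2} to secure the required monotonicity.
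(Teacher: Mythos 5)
Your proposal is correct and follows essentially the same route as the paper, whose own proof of Theorem~\ref{theorem: ExNaCom} consists solely of the remark that the argument mirrors that of Theorem~\ref{theorem: NaEx}; you have simply executed that mirroring explicitly, case by case. Your additional care in the fractional case—verifying non-singularity of $I-\widehat{P}_\alpha^{(\lfloor x\rfloor+1,x)}$ and adapting the continuity-and-monotonicity lemma so the intermediate value theorem yields $\widehat{\chi}_m$—supplies precisely the details the paper chose to omit, and is consistent with how the paper handles the $N$-case.
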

\begin{proof}
    The argument follows an approach similar to the proof of Theorem \ref{theorem: NaEx}, and we omit the details here.
\end{proof}


\begin{figure}[h]
	\centering{\includegraphics[width=0.5\linewidth]{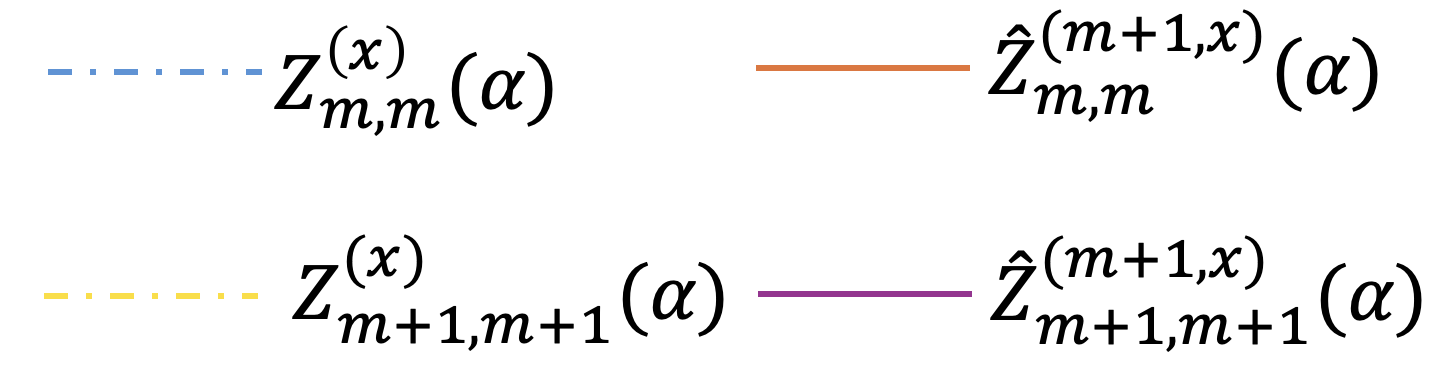}}\\
	\subcaptionbox{}
	{\includegraphics[width=0.323\linewidth]{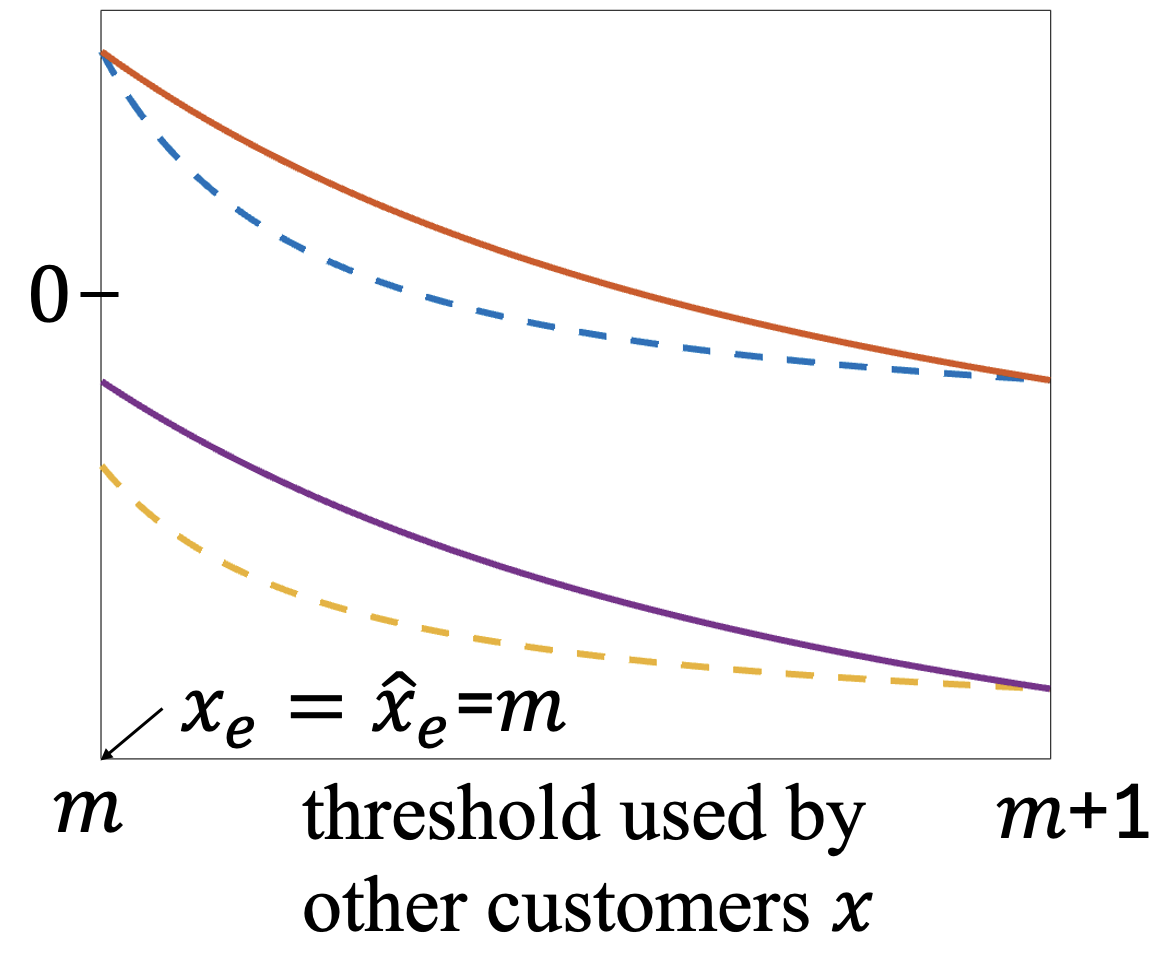}}  
	\hspace{\fill}
	\subcaptionbox{}
	{\includegraphics[width=0.323\linewidth]{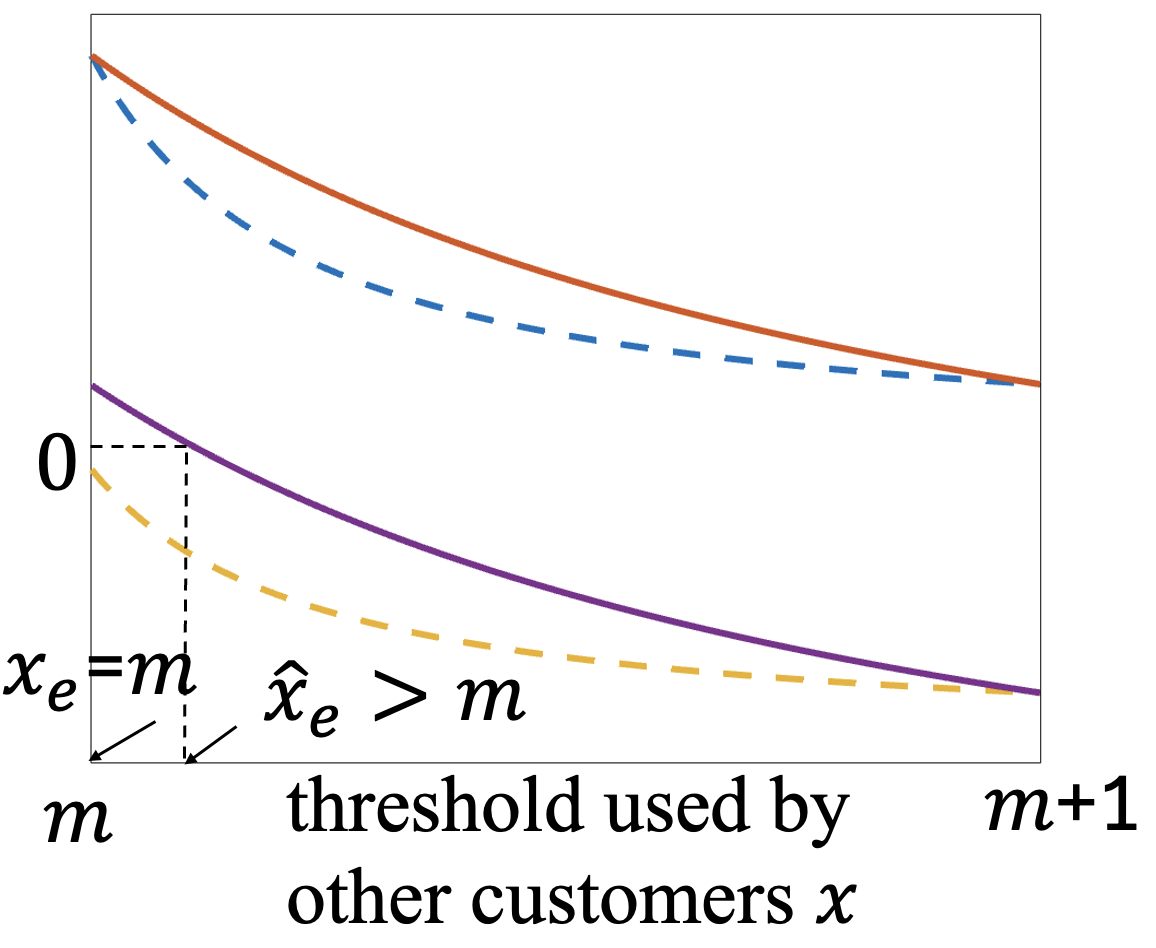}} 
	\hspace{\fill}
	\subcaptionbox{}
	{\includegraphics[width=0.323\linewidth]{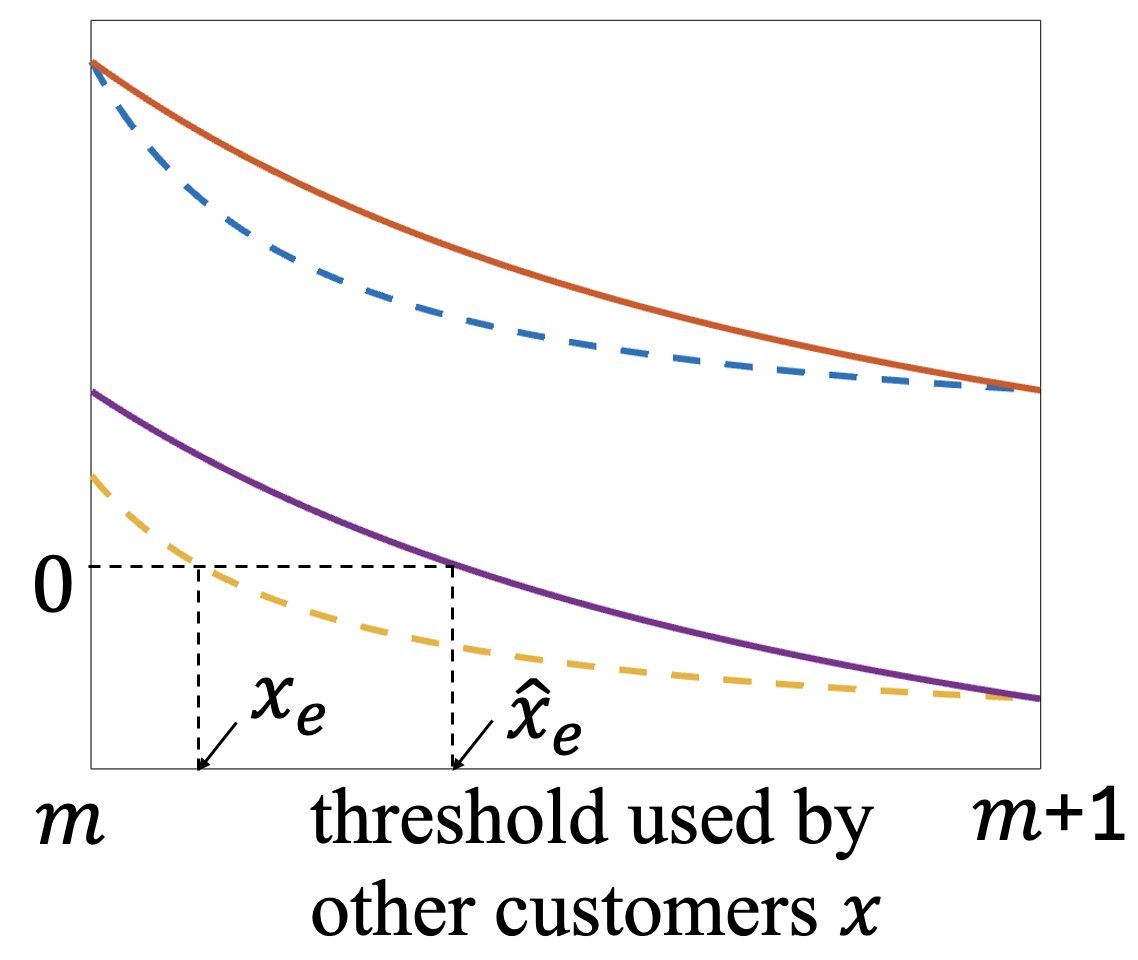}} 
	\caption{An illustration of Nash equilibrium threshold comparison.} \label{fig:NEE}
\end{figure}  
The following corollary compares $\widehat{x}_e$ and $x_e$, with Figure \ref{fig:NEE} illustrating the differences across three distinct scenarios.
\begin{corollary}
    Under identical parameter settings, the equilibrium threshold satisfies $\widehat{x}_e \geq x_e$, that is, customers have a stronger joining incentive when reneging is permitted.
\end{corollary}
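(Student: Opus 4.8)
The plan is to reduce the comparison to the behaviour of the \emph{marginal joiner}, i.e.\ a customer who contemplates taking the highest admissible position when everybody uses the same threshold. Writing $a_m := z_{m+1,m+1}^{(m)}(\alpha)$ and $b_m := z_{m+1,m+1}^{(m+1)}(\alpha)$ for the $N$-case, and $\widehat a_m := \widehat z_{m+1,m+1}^{(m+1,m)}(\alpha)$, $\widehat b_m := \widehat z_{m+1,m+1}^{(m+1,m+1)}(\alpha)$ for the $R$-case, the two characterisations in Theorems~\ref{theorem: NaEx} and \ref{theorem: ExNaCom} are governed entirely by the signs of these quantities. The first step is to extract from Lemma~\ref{lemma:ECompareZ} the two structural facts that drive everything: taking $x=m$ gives $\widehat a_m \ge a_m$ (the inequality with $j=\lfloor x\rfloor+1$), while the integer-equality part, applied with $x=m+1$ and $j=m+1\le\lfloor x\rfloor$, gives $\widehat b_m = b_m$. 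Intuitively, fixing the common threshold at an integer removes the reneging option on the relevant sample paths, so the two models agree there, whereas off the integers the $R$-case is strictly more generous.

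Second, I would split on the form of $x_e$. If $x_e=0$ the claim is immediate since $\widehat x_e\ge 0$. If $x_e=\chi_m\in(m,m+1)$, then $b_m<0<a_m$; since $\widehat b_m=b_m<0$ and $\widehat a_m\ge a_m>0$, the $R$-case lies in the \emph{same} fractional regime, so $\widehat x_e=\widehat\chi_m\in(m,m+1)$. To order the two roots I evaluate the $R$-case crossing function at $\chi_m$: Lemma~\ref{lemma:ECompareZ} with $x=\chi_m$ gives $\widehat z_{m+1,m+1}^{(m+1,\chi_m)}(\alpha)\ge z_{m+1,m+1}^{(\chi_m)}(\alpha)=0$, and because $x\mapsto\widehat z_{m+1,m+1}^{(m+1,x)}(\alpha)$ is strictly decreasing on $(m,m+1)$ (the $R$-analogue of the monotonicity used just before Theorem~\ref{theorem: NaEx}) and vanishes at $\widehat\chi_m$, this forces $\widehat\chi_m\ge\chi_m$. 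If instead $x_e=m$, so that $a_m\le 0\le z_{m,m}^{(m)}(\alpha)$, I rule out $\widehat x_e<m$: any candidate $m'$ or $\widehat\chi_{m'}$ with $m'\le m-1$ would require either $\widehat b_{m'}=b_{m'}<0$ or $\widehat a_{m'}\le 0$, but chaining Corollary~\ref{corollary:1} (position monotonicity) with the threshold monotonicity that the text derives from Corollaries~\ref{corollary:1}--\ref{corollary:2} yields $z_{m'+1,m'+1}^{(m')}(\alpha)\ge z_{m'+1,m'+1}^{(m)}(\alpha)\ge z_{m,m}^{(m)}(\alpha)\ge 0$, which contradicts $b_{m'}<0$ and, through $\widehat a_{m'}\ge a_{m'}=z_{m'+1,m'+1}^{(m')}(\alpha)\ge 0$, contradicts $\widehat a_{m'}\le 0$.

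The main obstacle is the treatment of the degenerate boundary configurations, where one governing quantity is exactly zero and the equilibrium is non-unique (for instance $z_{1,1}^{(0)}(\alpha)=0$, giving $x_e=\zeta_0$, or $a_{m'}=0$ in the integer case). There the contradictions above collapse to equalities, and one must argue that the relevant equilibria \emph{coincide} rather than reverse. I expect this to be resolved by invoking \emph{strictness} of the underlying stochastic-dominance results: for $\alpha>0$ the coupling arguments of Lemmas~\ref{lemma:1} and \ref{lemma:2} give strictly more expected visits to position $1$ from a lower position or a shorter queue, so the monotonicity chain above is strict whenever $m'<m$, delivering $\widehat a_{m'}>0$ and a genuine contradiction. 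The residual ties occur only on a measure-zero set of parameters and can be absorbed into the non-uniqueness already present in the theorem statements, so that representatives satisfying $\widehat x_e\ge x_e$ may always be selected.
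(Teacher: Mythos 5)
Your proposal is correct and takes essentially the same route as the paper's proof: both arguments rest on Lemma~\ref{lemma:ECompareZ} (yielding $\widehat z_{m+1,m+1}^{(m+1,m)}(\alpha) \geq z_{m+1,m+1}^{(m)}(\alpha)$ and $\widehat z_{m+1,m+1}^{(m+1,m+1)}(\alpha) = z_{m+1,m+1}^{(m+1)}(\alpha)$) combined with the position and threshold monotonicity from Corollaries~\ref{corollary:1} and~\ref{corollary:2}, and your case split on the form of $x_e$ is just a reorganization of the paper's three sign-pattern cases. If anything, you are more explicit on two points the paper leaves implicit—the ordering $\chi_m \leq \widehat\chi_m$ obtained by evaluating the strictly decreasing map $x \mapsto \widehat z_{m+1,m+1}^{(m+1,x)}(\alpha)$ at $\chi_m$, and the degenerate boundary ties—so there is no gap.
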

\begin{proof}
    \begin{itemize}
		\item When $ \widehat{z}_{m+1,m+1}^{(m+1,m)} (\alpha)  \leq 0 \leq   z_{m,m}^{(m)}(\alpha)$,
		\begin{equation}
		\widehat{z}_{m,m}^{(m,m)} (\alpha)=  {z}_{m,m}^{(m)}(\alpha) \geq 0  \,,
		\end{equation}
		and
		\begin{equation}
		{z}_{m+1,m+1}^{(m)} (\alpha)< \widehat{z}_{m+1,m+1}^{(m+1,m)} (\alpha) \leq 0 \,.
		\end{equation}
		Hence in both the $N$-case and the $R$-case, $x_e=\widehat{x}_e = m$. This case is represented in Figure \ref{fig:NEE}(a).
		
		\item When $ z_{m+1,m+1}^{(m)} (\alpha)< 0 \leq \widehat{z}_{m+1,m+1}^{(m+1,m)}(\alpha)$,
		\begin{equation}
		{z}_{m,m}^{(m)} (\alpha)	= \widehat{z}_{m,m}^{(m,m)} (\alpha)> \widehat{z}_{m+1,m+1}^{(m+1,m)}(\alpha)  > 0 \,,
		\end{equation}
		and 
		\begin{equation}
		\widehat{z}_{m+1,m+1}^{(m+1,m+1)}(\alpha) < z_{m+1,m+1}^{(m)}(\alpha) \leq 0 \,.
		\end{equation}
		Hence, $x_e= m < \widehat{x}_e < m+1$. This case is represented in Figure \ref{fig:NEE}(b).
		
		\item When $\widehat{z}_{m+1,m+1}^{(m+1,m+1)}(\alpha) < 0 <   z_{m+1,m+1}^{(m)}(\alpha)$,
		\begin{equation}
		{z}_{m+1,m+1}^{(m+1)}(\alpha) = \widehat{z}_{m+1,m+1}^{(m+1,m+1)}(\alpha) < 0
		\end{equation}
		and
		\begin{equation}
		\widehat{z}_{m+1,m+1}^{(m+1,m)}(\alpha)  > {z}_{m+1,m+1}^{(m)}(\alpha) > 0 \,.
		\end{equation}
		Hence, $m <x_e< \widehat{x}_e < m+1$. This case is represented in Figure \ref{fig:NEE}(c).
	\end{itemize}
\end{proof}

\section{The Effects of Decreasing the Discount Rate and the Value of the Outside option} \label{sec:ExParadox}

The parameter \( \alpha \) represents the discount rate. A customer who has experienced a waiting time $w$ before receiving a successful service will receive a higher reward if \( \alpha \) is smaller. Similarly, she will not have to pay as much to enter the system if the value $v$ of the outside option is smaller. Intuitively, we would expect both of these things to be good for customers. 

However, in this section, we demonstrate that there are parameter settings where decreasing \(\alpha \) and/or \( v \) decreases the stationary expected individual payoff, as defined in Equation~\eqref{eq:payoff}. 
We would also expect that permitting reneging ought to be a good thing for the customers. However, this too can result in a decrease of the stationary expected individual payoff. Such phenomena can be thought of as paradoxical in the sense that Braess's paradox \cite{brae68} is paradoxical. 

The first paradox arises when we examine the effect on the stationary individual payoff of decreasing \( \alpha \) or \( v \).

\begin{paradox} \label{paradox1a}
Despite our intuition that these measures should be good for customers, a decrease in \( \alpha \) or \( v \) can paradoxically lead to a decrease in the stationary individual payoff \( V(x_e) \).
\end{paradox}

\begin{table}[h!]
\begin{center}
{\footnotesize
\begin{tabular}{|l|c|c|c|c|} 
\hline
\multirow{2}{*}{$v = 0.5$} &\multicolumn{4}{|c|}{$\alpha$} \\
\cline{2-5}
& $0.1$ & $0.075$ & $0.05$ & $0.025$  \\
\hline
$x_e$ & 1 & 1.48 & 2.21 & 5 \\ 
\cline{1-5}
$V(x_e)$ & 0.0055 & 0.0033 & 0.0048 & 0.0046 \\
\hline
\multirow{2}{*}{$\alpha = 0.05$} &\multicolumn{4}{|c|}{$v$} \\
\cline{2-5}
& $0.62$ &  $0.6$ & $0.5$ & $0.49$  \\
\hline
$x_e$ &  1.13 & 1.85 & 2.22 & 2.53  \\ 
\cline{1-5}
 $V(x_e)$ & 0.0073 & 0.0016 & 0.0047 & 0.0024 \\
\hline
\end{tabular}}
\caption{The Nash equilibrium thresholds and the stationary individual payoffs when $\lambda = 1, \, \mu = 0.5$.} \label{tab:table0}
\end{center}
\end{table}

Table \ref{tab:table0} presents the Nash equilibrium thresholds alongside the corresponding stationary individual payoffs. As $\alpha$ or $v$ decreases, the threshold $x_e$ increases. However, the stationary individual payoff may, counterintuitively, decrease. For the case where the payoff is a linear function of the waiting time,a similar phenomenon was observed in \citet[Paradox 1]{FTW21}. This occurs because a higher equilibrium threshold leads to more customers joining the system, which increases congestion and hence the expected sojourn time for all individuals ultimately making everyone worse off. 

Our next paradox concerns the effect of allowing reneging on the stationary individual payoff. While reneging introduces more flexibility for customers and leads to an increase in the equilibrium threshold, it is not clear whether this results in a higher or lower stationary individual payoff.

\begin{paradox}
 When the Nash equilibrium threshold in the $N$ case is an integer, the stationary expected individual payoff for the $R$-case is the same as in the $N$-case. When the Nash equilibrium is not an integer, the stationary expected individual payoff for the $R$-case can be lower than in the $N$-case.
\end{paradox}

\begin{theorem}
    When $x_e = \widehat{x}_e = m$, we have 
    \[
    V(x_e) = V(\widehat{x}_e) \,;
    \]
    whereas when $x_e= m < \widehat{x}_e < m+1$, then it is possible that
    \[
    V(x_e) > V(\widehat{x}_e) \,.
    \]
\end{theorem}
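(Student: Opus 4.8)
The plan is to treat the two assertions separately: the first is an exact structural identity, which I would argue by showing the two underlying processes coincide, while the second is an existence claim (``it is possible'') that I would settle by exhibiting an explicit parameter instance rather than proving in general.

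For the equality when $x_e = \widehat{x}_e = m$ is an integer, the first observation is that for an integer threshold the fractional part $p = x - \lfloor x \rfloor$ vanishes, so an arriving customer who finds $m$ others already present balks with probability one. Consequently, in both the $N$- and $R$-cases the number in system is capped at $m$: arrivals cannot push it above $m$, and the feedback of a failed service leaves the number in system unchanged. The only mechanism by which the two models differ is reneging, and a reneging decision can only be triggered at queue length $\lfloor x\rfloor + 1 = m+1$, a level that is never reached when $p=0$. Hence the queue-length generator --- a birth--death chain with up-rates $\lambda$ and effective down-rates $\mu q$ on $\{0,\dots,m\}$ --- is literally identical in the two cases, so the stationary vectors coincide, $\pi_i^{(m)} = \widehat\pi_i^{(m)}$. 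Combining this with the integer case of Lemma~\ref{lemma:ECompareZ}, which gives $\widehat z_{i+1,i+1}^{(m,m)}(\alpha) = z_{i+1,i+1}^{(m)}(\alpha)$ at each relevant position, and substituting into the definition \eqref{eq:payoff} of $V$, yields $V(x_e) = V(\widehat x_e)$ term by term (so that the ambiguous weight at the $i=m$ boundary, where the joining probability is $p=0$, is irrelevant since it is the same in both models).

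For the second assertion, $x_e = m < \widehat{x}_e < m+1$, the two models genuinely diverge: in the $R$-case with threshold $\widehat x_e = m+p$, $p\in(0,1)$, the number in system can reach $m+1$, and at that level a failed service causes a reneging with probability $1-p$. I would first write down the $R$-case queue-length birth--death chain on $\{0,\dots,m+1\}$ --- up-rates $\lambda$ below level $m$, up-rate $\lambda p$ from $m$ to $m+1$, down-rate $\mu q$ below level $m+1$ and down-rate $\mu\bigl(q+(1-q)(1-p)\bigr)$ at level $m+1$ --- solve it for $\widehat\pi$, and obtain the per-position payoffs $\widehat z^{(m+1,\,m+p)}_{i+1,i+1}(\alpha)$ from Poisson's equation \eqref{eq:EPoisson2}. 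Assembling these through \eqref{eq:payoff} gives $V(\widehat x_e)$, to be compared against the integer-threshold value $V(x_e)$ computed as in the first part.

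Since only possibility is claimed, I would establish the strict inequality by a single explicit numerical instance. The mechanism to exploit is the congestion externality behind Paradox~\ref{paradox1a}: allowing reneging raises the equilibrium threshold from $m$ to $m+p$, so more customers join and the system spends more time near capacity; the resulting longer sojourn times depress the discounted reward $\mathbb{E}[e^{-\alpha W}]$ for everyone, and this aggregate loss can outweigh the marginal flexibility that reneging confers. Concretely, I would fix $\lambda,\mu,q$ (for example in the regime $\lambda=1$, $\mu=0.5$ of Table~\ref{tab:table0}) and search over $(\alpha,v)$ for a point at which Theorem~\ref{theorem: NaEx} returns an integer $x_e=m$ while Theorem~\ref{theorem: ExNaCom} returns an interior $\widehat x_e\in(m,m+1)$, then verify numerically that the assembled payoffs satisfy $V(x_e)>V(\widehat x_e)$. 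I expect this last step to be the main obstacle: one must locate a parameter window in which the $N$-case equilibrium is exactly integer while the $R$-case equilibrium is interior --- a delicate condition given the piecewise equilibrium characterisations --- and confirm that the externality is strong enough to reverse the sign. Checking the equilibrium conditions of Theorems~\ref{theorem: NaEx} and~\ref{theorem: ExNaCom} at the chosen point, together with the positivity of $(I-\widehat P_\alpha^{(\lfloor x\rfloor+1,x)})^{-1}$ that underlies the payoff computation, is where the care is required.
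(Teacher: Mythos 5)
Your proposal is correct and, in essence, follows the paper's own route: the paper disposes of the integer case by appeal to the argument of \citet[Paradox 2]{FTW21} (details omitted) and settles the second claim purely by numerical exhibit (Table~\ref{tab:paradox2}, with $q=0.2$, $v=1$ and two choices of $(\alpha,\lambda,\mu)$). Your part-one argument---at an integer threshold $p=0$, so level $m+1$ is unreachable, reneging is never triggered, the queue-length birth--death chains (up-rate $\lambda$, effective down-rate $\mu q$ on $\{0,\dots,m\}$) coincide in the two models, and the per-position payoffs agree by the integer case of Lemma~\ref{lemma:ECompareZ}---is precisely the content of the proof the paper omits, and your disposal of the $i=m$ boundary term in \eqref{eq:payoff} (joining probability zero in both models, so the ambiguous weight contributes nothing) is the one point that actually needs saying and that the paper leaves implicit. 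Two remarks on part two. First, your worry that pinning $x_e$ at an exact integer while $\widehat{x}_e$ is interior is ``delicate'' is unfounded: by Theorem~\ref{theorem: NaEx}, $x_e=m$ holds on the open parameter region $z_{m+1,m+1}^{(m)}(\alpha)\le 0\le z_{m,m}^{(m)}(\alpha)$ (integer equilibria are robust precisely because of the floor structure), and the regime $x_e=m<\widehat{x}_e<m+1$ is characterized by the conditions $z_{m+1,m+1}^{(m)}(\alpha)<0\le\widehat{z}_{m+1,m+1}^{(m+1,m)}(\alpha)$ appearing in the second case of the corollary comparing $\widehat{x}_e$ with $x_e$, so a parameter search is unproblematic. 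Second, note that the paper's own witnesses in Table~\ref{tab:paradox2} report non-integer values $x_e=2.37$ and $x_e=2.17$, i.e.\ they sit in the regime $m<x_e<\widehat{x}_e<m+1$ rather than under the theorem's literal hypothesis $x_e=m$; your plan to exhibit an instance with $x_e$ exactly integer is therefore, if anything, more faithful to the statement as written than the paper's own verification, and the computational pipeline you describe (solve the $R$-case birth--death chain for $\widehat{\pi}$, obtain the per-position payoffs from Poisson's equation \eqref{eq:EPoisson2}, assemble via \eqref{eq:payoff}) is exactly the machinery the paper uses to generate its table.
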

\begin{proof}
    The proof of the first part is similar to that for \citet[Paradox 2]{FTW21}, so we omit the details here. 

    For two sets of parameter values, Table \ref{tab:paradox2} illustrates the equilibrium thresholds, the equilibrium payoffs conditional on the tagged customer's joining state and the stationary expected individual payoffs. We observe that the stationary expected individual payoff is smaller when reneging is allowed than when it is not. 

    \begin{table}[H]
	\caption{Numerical examples with parameters $R = 2$, $q = 0.2$, and $v = 1$.} 
	\label{tab:paradox2}
	\centering
	\begin{tabular}{|l | c  | c | } 
		\hline
		\quad $\alpha, \, \lambda, \, \mu$ & $0.05, 0.4, 0.7 $ & $0.04, 0.4, 0.55$ \, \\ [0.5ex] 
		\hline 
		$x_e\qquad\qquad \, \widehat{x}_e$ & $2.37 \quad 2.84$ & $2.17\quad2.70$ \\
		\hline
		$z_{1,1}^{(x_e)}(\alpha)\qquad \widehat{z}_{1,1}^{(\widehat{x}_e, \widehat{x}_e)} (\alpha)$ & $ 0.29 \quad 0.27$ & $0.28 \quad0.25$ \\
		\hline
		$z_{2,2}^{(x_e)}(\alpha)\qquad \widehat{z}_{2,2}^{(\widehat{x}_e, \widehat{x}_e)} (\alpha)$ & $0.12 \quad 0.09$  & $0.13 \quad 0.09$ \\
		\hline
		$z_{3,3}^{(x_e)}(\alpha)\qquad \widehat{z}_{3,3}^{(\widehat{x}_e, \widehat{x}_e)} (\alpha)$ & $0 \quad 0$  & $0 \quad 0$ \\
		\hline
        stationary expected individual payoff & $0.034 \quad 0.022$    & $0.028 \quad 0.017$ \\
		\hline
	\end{tabular}
\end{table}	
\end{proof}

In our numerical experiments for cases where $m <x_e< \widehat{x}_e < m+1$, we were unable to find a set of parameter values where the stationary expected individual payoff was not lower for the $R$-case than for the $N$-case. This leads us to conjecture that $V(x_e)$ is always greater than $V(\widehat{x}_e)$ when $m <x_e< \widehat{x}_e < m+1$. We are, however, unable to prove this in general. 


\section{The sojourn time distribution} \label{sec:ExTimeDist}

In this section, we follow the third interpretation described in \ref{threeroles} and treat $\alpha$ as a Laplace transform variable. We first derive the Laplace transform of the sojourn time and then numerically invert it to obtain the corresponding distribution. To illustrate how this distribution can be used, we present an example involving a payoff that depends on the expectation of a non-standard functional of the waiting time. Since the procedure for the $N$-case is similar to that for the $R$-case, we focus exclusively on the $N$-case.

We begin by defining
\[
F^{(x)}_{i,j}(w) \equiv \mathbb{P}(W^{(x)}_{i,j} \leq w), \qquad 1 \leq i \leq j \leq \lfloor x \rfloor + 1,
\]
as the distribution function of $W^{(x)}_{i,j}$ in the $N$-case. For $\Re(\alpha) \geq 0$, the Laplace transform of this distribution is given by
\begin{align}
\widetilde{F}^{(x)}_{i,j}(\alpha) \equiv \mathcal{L}(F^{(x)}_{i,j}(t)) := \int_{0}^{\infty} e^{-\alpha t} \, dF^{(x)}_{i,j}(t) = \mathbb{E} \left( e^{-\alpha W_{i,j}^{(x)}} \right), \label{eq: Laplace1}
\end{align}
which matches Equation~\eqref{eq:Wij}.

The distributions of the $W^{(x)}_{i,j}$ can be obtained by numerically inverting the Laplace transforms $\widetilde{F}^{(x)}_{i,j}(\alpha)$, using techniques such as those described in \citet{K21}. Once these distributions are available for $1 \leq i \leq j \leq \lfloor x \rfloor$, any expected payoff that depends on the sojourn time can be computed, assuming that arriving customers decide to join the queue according to a threshold policy. Both the payoff studied in \citet{FTW21} and the discounted reward considered in this work fall into this category.
In the following, we demonstrate how the sojourn time distribution can be applied through an example involving a different payoff setting.

\citet{HH95} considered a setting in which an individual's reward drops to zero if their waiting time exceeds a fixed threshold \( \Xi \). Specifically, a customer receives a reward \( R \) if her service is completed within \( \Xi \) time units; otherwise, the reward is zero. We shall consider a situation where customers arriving to the feedback queue are subject to a similar reward structure.  

Let
\[
\phi_i^{(x)}(\Xi) \equiv \mathbb{P}\left(W_i^{(x)} \leq \Xi \right),
\]
denote the probability that a customer's sojourn time is less than \( \Xi \), assuming all other customers adopt the threshold strategy \( x \), and she arrives in position \( i \). This quantity captures the customer's perceived likelihood of receiving service before the deadline \( \Xi \), beyond which her value drops to zero.
A customer chooses to join the system only if this probability exceeds her personal tolerance for delay-related risk, denoted by \( \gamma \in [0,1]\). This leads to a payoff criterion:
\begin{equation}\label{eq:payoff2}
   \phi_i^{(x)}(\Xi) - \gamma.
\end{equation}
According to the definition of a Nash equilibrium, if \( \phi_i^{(x)}(\Xi) > \gamma \), the customer prefers to join; if \( \phi_i^{(x)}(\Xi) = \gamma \), she is indifferent between joining and balking; and if \( \phi_i^{(x)}(\Xi) < \gamma \), she prefers to balk.

The Nash equilibrium threshold under this criterion is 
\begin{equation}
    x_e^{(\gamma,\Xi)} = 
    \begin{dcases}
        0 & \phi_{1}^{(0)}(\Xi) < \gamma \\
        \zeta_0 & \phi_{1}^{(0)}(\Xi) = \gamma \\
        m & \phi_{m+1}^{(m)}(\Xi) \leq \gamma \leq P_m^{(m)}(\Xi) \\
        \chi_m^{(\gamma,\Xi)} & \phi_{m+1}^{(m+1)}(\Xi) < \gamma < \phi_{m+1}^{(m)}(\Xi) 
    \end{dcases} \,,
\end{equation}
where $\chi_m^{(\gamma,\Xi)}:=\{x : \phi_{m+1}^{(x)}(\Xi)  = \gamma \}$.

 \begin{figure}[H]
	\centering
{\includegraphics[width=0.6\linewidth]{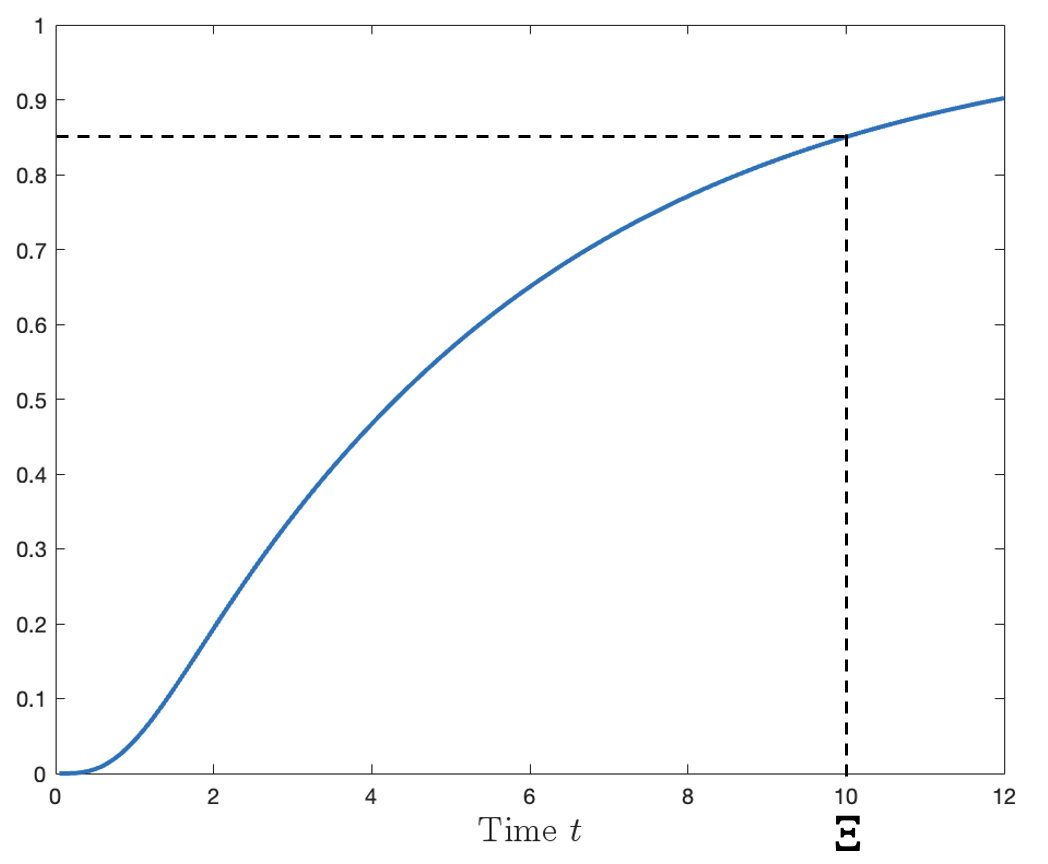}} 	\caption{The cumulative distribution function $\phi_{4}^{(3.6)}(\Xi)$ of the sojourn time for a tagged customer who joins in position $4$, while all other customers follow a threshold strategy with $x = 3.6$. The parameters are $\lambda = 1$, $\mu = 2$, and $q = 0.3$. In this example, the Nash equilibrium threshold is $x_e^{(0.85,10)} = 3.6$.
} \label{fig:SojournTimeD}
\end{figure}
We now use an example to illustrate how to identify a Nash equilibrium. 
Figure~\ref{fig:SojournTimeD} presents the cumulative distribution function $\phi_{4}^{(3.6)}(\Xi)$ of the sojourn time. Suppose $\gamma = 0.85$, then the figure shows that if a tagged customer joins in position 4, the probability that she will be served within $\Xi = 10$ units is 85\%.
Now, consider different thresholds used by others:
\begin{itemize}
    \item If the other customers use a threshold of 3, the system will become less crowded and $\phi_{4}^{(3)}(10) > 0.85$. Thus, the tagged customer will strictly prefer to join in position 4, and her best response is to use a threshold greater than $4$.
    \item If the other customers use a threshold of 4, the system will become more crowded and $\phi_{4}^{(4)}(10) < 0.85$. In this case, the best response of the tagged customer is to use a threshold $\leq 3$.
    \item If the other customers use a threshold of 3.6, the tagged customer's best response lies between 3 and 4, making her indifferent.
\end{itemize}
Therefore, 3.6 is the Nash equilibrium joining threshold.

The value of \( \Xi \) serves as a measure of how long the service quality persists before deteriorating to zero. A higher \( \Xi \) indicates that the service remains valuable for a longer duration. Intuitively, this should encourage more customers to join the system. This is confirmed by the increasing equilibrium threshold \( x_e \) as \( \Xi \) increases from 8 to 10 depicted in Table~\ref{tab:table1}.

However, Table~\ref{tab:table1} also reveals a paradox similar to that described in Paradox~\ref{paradox1a}. As shown in Table~\ref{tab:table1}, when \( \Xi \) increases from 9 to 10, the stationary individual payoff decreases, despite the longer-lasting service quality. This counterintuitive outcome arises because the improved service persistence attracts more customers, resulting in increased congestion and, ultimately, a lower stationary expected individual payoff.
\begin{table}[h!]
\begin{center}
{\footnotesize
\begin{tabular}{|l|l|c|c|c|c|} 
\hline
\multirow{2}{*}{$\gamma$}  & Nash equilibrium $\&$ &\multicolumn{3}{|c|}{$\Xi$} \\
\cline{3-5}
& stationary individual payoff & $8$ &  $9$ & $10$ \\
\hline
\multirow{2}{*}{$0.85$} 
&  $x_e$ &  3 & 3.03 & 3.61  \\ 
\cline{2-5}
&  $V(x_e)$ & 0.4742 & 0.8589 & 0.5957 \\
\hline
\end{tabular}}
\caption{The Nash equilibrium thresholds and the stationary individual payoffs for $\Xi = 8, \, 9, \, 10$, when $\lambda = 1, \, \mu = 2$.} \label{tab:table1}
\end{center}
\end{table}

Next, we focus on an example with \( \Xi = 10 \), and compare the Nash equilibrium thresholds under three different values of \( \gamma \): (1) \( \gamma = 0.8 \); (2) \( \gamma = 0.85 \); (3) \( \gamma = 0.9 \).
We then compute the Nash equilibrium threshold and the stationary individual payoff for \( q = 0.3 \) and \( q = 0.5 \). The key results are summarized in Table~\ref{tab:measure}. As shown, the equilibrium threshold \( x_e^{(\gamma, \Xi)} \) decreases monotonically as \( \gamma \) increases. This trend is intuitive: a higher service standard \( \gamma \) reduces the incentive to join.
The relationship between the stationary individual payoff and the service standard reveals an interesting threshold effect:
\begin{itemize}
    \item When the success probability is higher (\( q = 0.5 \)), customers achieve higher expected payoffs by adopting the stricter standard (\( \gamma = 0.9 \)).
    \item Conversely, when \( q = 0.3 \), the expected payoff is minimized at \( \gamma = 0.9 \), suggesting that employing an overly strict standard may be detrimental in lower-quality systems.
\end{itemize}
This non-monotonic behavior underscores the importance of accounting for the underlying probability distribution in strategic decision-making.

\begin{table}[http]
\centering
\caption{Nash equilibrium thresholds and corresponding stationary individual payoffs under different $q$ and $\gamma$, with $\Xi = 10$.} \label{tab:measure}
\begin{tabular}{|l|l|l|c|}
\hline
$q$ & $\gamma$ & Nash equilibrium & Stationary individual payoff \\
\hline
\multirow{3}{*}{0.3} & 0.8 & $x_e^{(0.8, 10)} = 4.05$ & 0.8042 \\
 & 0.85 & $x_e^{(0.85, 10)} = 3.61$  & 0.5957 \\
 & 0.9 & $x_e^{(0.9, 10)} =3$ & 0.5014 \\
 \hline
\multirow{3}{*}{0.5} & 0.8 & $x_e^{(0.8, 10)} = 8$ & 0.7983 \\
 & 0.85 & $x_e^{(0.85, 10)} =7$ & 0.8069 \\
 & 0.9 & $x_e^{(0.9, 10)} = 6$ & 0.8102\\
\hline
\end{tabular}
\end{table}

\section*{Acknowledgments}
\noindent P. G. Taylor's research is supported by the Australian Research Council (ARC) Laureate Fellowship FL130100039 and the ARC Centre of Excellence for the Mathematical and Statistical Frontiers (ACEMS). J. Wang would like to thank the University of Melbourne for supporting her work through the Melbourne Research Scholarship and the Albert Shimmins Fund. 


\begin{appendix}
	\section{The non-reneging case} \label{appendix:1.1}
	\begin{align}
	\begin{split}
	&P_{\alpha}^{(x)} = \begin{bmatrix}
	A_0^{(1)} & A_1^{(1)} & 0 & 0 & \cdots & \cdots & \cdots\\
	A_{-1}^{(2)} & A_0^{(2)} & A_1^{(2)}  & 0 & \cdots & \cdots & \cdots\\
	0 & A_{-1}^{(3)} & A_0^{(3)} & A_1^{(3)} & \cdots & \cdots & \cdots \\
	0 & 0 & A_{-1}^{(4)} & A_0^{(4)} & \cdots & \cdots & \cdots \\
	\vdots &\vdots & \vdots & \vdots & \ddots & \ddots &  \vdots \\
	\vdots &\vdots &\vdots & \vdots &  \vdots & A_{-1}^{(\lfloor x \rfloor + 1)} & A_0^{(\lfloor x \rfloor + 1)} \\
	\end{bmatrix} \,
	\end{split} \\
	\begin{split}
	&A_{-1}^{(k)} = \begin{bmatrix}
	0 & 0 & \cdots & 0\\
	\frac{\mu q}{\lambda+\mu+\alpha} & 0 & \cdots & 0 \\
	0 & \frac{\mu q}{\lambda+\mu+\alpha}  & \cdots & 0 \\
	\vdots & \vdots & \ddots & \vdots  \\
	0 & 0 & \cdots &  \frac{\mu q}{\lambda+\mu+\alpha}
	\end{bmatrix} \in \mathbb{R}^{k \times (k-1)} \, \qquad k = 2, \cdots, \lfloor x \rfloor + 1
	\end{split} \\
	\begin{split}
	&A_{0}^{(k)} = 
	\begin{bmatrix}
	0 & 0 & \cdots & \frac{\mu (1-q)}{\lambda+\mu+\alpha}\\
	\frac{\mu (1-q)}{\lambda+\mu+\alpha} & 0 & \cdots & 0 \\
	\vdots & \ddots & \ddots & \vdots  \\
	0 & \cdots & \frac{\mu (1-q)}{\lambda+\mu+\alpha} &  0
	\end{bmatrix}
	\in \mathbb{R}^{k \times k}
	\,  \quad k = 1, \cdots, \lfloor x \rfloor-1
	\end{split}\\
	\begin{split}
	&A_{0}^{(\lfloor x \rfloor)} = \begin{bmatrix}
	\frac{\lambda(1-(x-\lfloor x \rfloor))}{\lambda+\mu+\alpha} & 0 & \cdots & 0\\
	0 & \frac{\lambda(1-(x-\lfloor x \rfloor))}{\lambda+\mu+\alpha} & \cdots & 0 \\
	\vdots & \vdots & \ddots & \vdots  \\
	0 & 0 & \cdots &  \frac{\lambda(1-(x-\lfloor x \rfloor))}{\lambda+\mu+\alpha}
	\end{bmatrix} +
	\begin{bmatrix}
	0 & 0 & \cdots & \frac{\mu (1-q)}{\lambda+\mu+\alpha}\\
	\frac{\mu (1-q)}{\lambda+\mu+\alpha} & 0 & \cdots & 0 \\
	\vdots & \ddots & \ddots & \vdots  \\
	0 & \cdots & \frac{\mu (1-q)}{\lambda+\mu+\alpha} &  0
	\end{bmatrix}
	\in \mathbb{R}^{\lfloor x \rfloor \times \lfloor x \rfloor}	\, 
	\end{split} \\
	\begin{split}
	&A_{0}^{(k)} = \begin{bmatrix}
	\frac{\lambda}{\lambda+\mu+\alpha} & 0 & \cdots & 0\\
	0 & \frac{\lambda}{\lambda+\mu+\alpha} & \cdots & 0 \\
	\vdots & \vdots & \ddots & \vdots  \\
	0 & 0 & \cdots &  \frac{\lambda}{\lambda+\mu+\alpha}
	\end{bmatrix} +
	\begin{bmatrix}
	0 & 0 & \cdots & \frac{\mu (1-q)}{\lambda+\mu+\alpha}\\
	\frac{\mu (1-q)}{\lambda+\mu+\alpha} & 0 & \cdots & 0 \\
	\vdots & \ddots & \ddots & \vdots  \\
	0 & \cdots & \frac{\mu (1-q)}{\lambda+\mu+\alpha} &  0
	\end{bmatrix}
	\in \mathbb{R}^{k \times k} \qquad k = \lfloor x \rfloor+1
	\end{split}\\
	\begin{split}
	&A_{1}^{(k)} = 
	\begin{bmatrix}
	\frac{\lambda}{\lambda+\mu+\alpha} & 0 & \cdots & 0 & 0\\
	0 & \frac{\lambda}{\lambda+\mu+\alpha} & \cdots & 0 & 0\\
	\vdots & \vdots & \ddots & \vdots & \vdots \\
	0 & 0 & \cdots &  \frac{\lambda}{\lambda+\mu+\alpha} & 0
	\end{bmatrix}
	\in \mathbb{R}^{k \times (k+1)}
	\,  \quad k = 1, \cdots, \lfloor x \rfloor-1
	\end{split} 
		\end{align}
	\begin{align}
	\begin{split}
	&A_{1}^{(\lfloor x \rfloor)} = \begin{bmatrix}
	\frac{\lambda p}{\lambda+\mu+\alpha} & 0 & \cdots & 0 & 0\\
	0 & \frac{\lambda p}{\lambda+\mu+\alpha} & \cdots & 0 & 0\\
	\vdots & \vdots & \ddots & \vdots & \vdots \\
	0 & 0 & \cdots &  \frac{\lambda p}{\lambda+\mu+\alpha} & 0
	\end{bmatrix}
	\in \mathbb{R}^{\lfloor x \rfloor \times (\lfloor x \rfloor+1)} \qquad A_1^{(\lfloor x \rfloor)} = \bm{0}_{\lfloor x \rfloor \times (\lfloor x \rfloor+1)} \,
	\end{split}
	\end{align}

    \section{Algorithm 1} \label{appendix:algorithm1}
    \begin{algorithm}[h]
	\caption{}\label{alg:Poisson}
	\begin{algorithmic}[1]
		\Procedure{Calculate $U^{(j)}$, $\Gamma^{(j)}$, $G^{(j)}$}{} \Comment{The $U^{(j)}, \Gamma^{(j)}, G^{(j)}$ for $P_\alpha	^{(x)}$}
		\State $U^{(\lfloor x \rfloor+1)} \gets A_0^{(\lfloor x \rfloor+1)}$
		\State $\Gamma^{(\lfloor x \rfloor+1)} \gets A_1^{(\lfloor x \rfloor)} \, (\mathbf{I}- U^{(\lfloor x \rfloor+1)})^{-1}$
		\State $G^{(\lfloor x \rfloor+1)} \gets (\mathbf{I}- U^{(\lfloor x \rfloor+1)})^{-1} \, A_{-1}^{(\lfloor x \rfloor+1)}$
		\For {$j = \lfloor x \rfloor:2$}
		\State $U^{(j)} \gets A_0^{(j)} + A_1^{(j)}\, G^{(j+1)}$
		\State $\Gamma^{(j)} \gets A_1^{(j-1)} \, (\mathbf{I}- U^{(j)})^{-1}$
		\State $G^{(j)} \gets (\mathbf{I}- U^{(j)})^{-1} \, A_{-1}^{(j)}$
		\EndFor
		\State \textbf{end}
		\EndProcedure
		\Procedure{Poisson's Equation}{$U^{(j)}, \Gamma^{(j)}, G^{(j)}, \bm{g}_\alpha$}
		\For {$i = 1:\lfloor x \rfloor +1$}
		\State $\bm{g}_i = \bm{g}_\alpha (\frac{i(i-1)}{2}+1:\frac{i(i+1)}{2})$
		\EndFor
		\State \textbf{end}
		\State $y(1) \gets 0$
		\For {$j = 1:\lfloor x \rfloor+1$}
		\State $y(j) \gets (\mathbf{I} - U^{(j+1)})^{-1} \, (\bm{g}_{j}+ \sum_{k = j}^{\lfloor x \rfloor} \Pi_{l=j+1: k+1} \, \Gamma^{(l)} \, \bm{g}_{k+2}) + \, G^{(j+1)} \, y(j-1)$
		\EndFor
		\State \textbf{end}
		\State $y(1) \gets \frac{\mu q}{\lambda + \mu+\alpha} + A_1^{(1)}\, y(2)$
		\State $\bm{\gamma}^{(x)}_\alpha(1)= \frac{y(1)}{1-(A_0^{(1)} \, + \, A_1^{(1)} \, G^{(2)})}$ 
		\For {$j = 2: \lfloor x \rfloor+1$}
		\State $\bm{\gamma}^{(x)}_\alpha(\frac{j(j-1)}{2}+1: \frac{j(j-1)}{2}+j) = y(j) \, + \, \Pi_{l=j:2} \, G^{(l)} \, \bm{\gamma}^{(x)}_\alpha(1)$
		\EndFor
		\State \textbf{end}
		\State \textbf{return} $\bm{\gamma}^{(x)}_\alpha$
		\EndProcedure
	\end{algorithmic} 
\end{algorithm}

\section{Expression for the expected discount factor in the $R$-case}
\label{appendix:wijR}
{\scriptsize
	\begin{align}
	&\mathbb{E}\left(e^{-\alpha \widehat{W}^{(\lfloor x \rfloor+1, x)}_{i,j}}\right) \\
	=& \mathbb{E}\left( \mathbb{E}\left(e^{-\alpha \widehat{W}^{(\lfloor x \rfloor+1, x)}_{i,j}} \mid T(t), X(T(t))\right) \right) \notag \\
	=& \int_{0}^{\infty} (\lambda+\mu) \, e^{-(\lambda+\mu) y}  \, \sum_{(i', j')} P(X(T(t)) = (i',j') \mid X(t) = (i,j) ) \, \mathbb{E}\left(e^{-\alpha \widehat{W}^{(\lfloor x \rfloor+1, x)}_{i,j}} \mid T(t) = y, X(T(t)) = (i',j') \right)  dy \notag \\
	=& \int_{0}^{\infty} (\lambda+\mu) \, e^{-(\lambda+\mu) y}  \, \sum_{(i', j')} P(X(T(t)) = (i',j') \mid X(t) = (i,j) ) \, \mathbb{E}\left(e^{-\alpha (y+\widehat{W}^{(\lfloor x \rfloor+1, x)}_{i',j'})} \right)  dy \notag\\
	=& \left( \sum_{(i', j')} P(X(T(t)) = (i',j') \mid X(t) = (i,j) ) \, \mathbb{E} \left(e^{-\alpha \widehat{W}^{(\lfloor x \rfloor+1, x)}_{i',j'}} \right) \, \int_{0}^{\infty} (\lambda+\mu) \, e^{-(\lambda+\mu+\alpha) y}  \,  dy \right) \notag \\
	=& \frac{\lambda}{\lambda+\mu+\alpha} \left( \mathbb{E} \left(e^{-\alpha \widehat{W}^{(\lfloor x \rfloor+1, x)}_{i, j+1}}\right)\mathbbm{1}_{\{i < \lfloor x \rfloor\}}  + \left( p \,  \mathbb{E} \left(e^{-\alpha \widehat{W}^{(\lfloor x \rfloor+1, x)}_{i, j+1}}\right) + (1-p) \, \mathbb{E} \left(e^{-\alpha \widehat{W}^{(\lfloor x \rfloor+1, x)}_{i, j}}\right) \right) \mathbbm{1}_{\{i = \lfloor x \rfloor\}}  + \mathbb{E} \left(e^{-\alpha \widehat{W}^{(\lfloor x \rfloor+1, x)}_{i, j}}\right) \mathbbm{1}_{\{i > \lfloor x \rfloor\}}  \right) \notag \notag\\
	&   \quad+ \frac{\mu}{\lambda+\mu+\alpha} \, \left( q+(1-q)(1-p) \mathbb{E} \left(e^{-\alpha W^{(x)}_{i-1, j-1}}\right) \mathbbm{1}_{\{j = \lfloor x\rfloor +1\}} +(1-q)(1-(1-p)\mathbbm{1}_{\{j = \lfloor x\rfloor +1\}})\mathbb{E} \left(e^{-\alpha W^{(x)}_{i-1, j}}\right) \right. \notag \\
	&  \left. \quad + \left(q  + (1-q) \, \mathbb{E} \left(e^{-\alpha W^{(\lfloor x \rfloor+1, x)}_{j, j}}\right) \right) \mathbbm{1}_{\{i = 1\}}  \right)   \notag
	\end{align}
}
	
	\section{The reneging case}\label{appendix: 2.1}
	\begin{align}
	&\widehat{P}_\alpha^{(\lfloor x \rfloor+1,x)} = \begin{bmatrix}
	A_0^{(1)} & A_1^{(1)} & \cdots & \cdots & \cdots & \cdots & 0\\
	A_{-1}^{(2)} & A_0^{(2)} & A_1^{(2)}  & \cdots & \cdots & \cdots & 0\\
	\vdots & A_{-1}^{(3)} & A_0^{(3)} & A_1^{(3)}   & \cdots & \cdots & 0 \\
	\vdots & \vdots & A_{-1}^{(4)} & A_0^{(4)} & A_1^{(4)} & \cdots & 0 \\
	\vdots &\vdots & \vdots & \vdots & \vdots & \ddots & \vdots \\
	0 &0 &0 & 0 &  0 & \widehat{A}_{-1}^{(\lfloor x \rfloor+1)} & \widetilde{A}_0^{(\lfloor x \rfloor+1)} 
	\end{bmatrix} \,,\\
	&\widehat{A}_{-1}^{(\lfloor x \rfloor+1)} = \begin{bmatrix}
	0 & 0 & \cdots & 0\\
	\frac{\mu q + \mu (1-q)(1-(x-\lfloor x \rfloor))}{\lambda+\mu+\alpha} & 0 & \cdots & 0 \\
	0 & \frac{\mu q + \mu (1-q)(1-(x-\lfloor x \rfloor))}{\lambda+\mu+\alpha}  & \cdots & 0 \\
	\vdots & \vdots & \ddots & \vdots  \\
	0 & 0 & \cdots &  \frac{\mu q + \mu (1-q)(1-(x-\lfloor x \rfloor))}{\lambda+\mu+\alpha}
	\end{bmatrix} \in \mathbb{R}^{(\lfloor x \rfloor + 1) \times  \lfloor x \rfloor} \\
	& \widetilde{A}_0^{(\lfloor x \rfloor+1)} =  \begin{bmatrix}
	\frac{\lambda}{\lambda+\mu+\alpha} & 0 & \cdots & 0\\
	0 & \frac{\lambda}{\lambda+\mu+\alpha} & \cdots & 0 \\
	\vdots & \vdots & \ddots & \vdots  \\
	0 & 0 & \cdots &  \frac{\lambda}{\lambda+\mu+\alpha}
	\end{bmatrix} +
	\begin{bmatrix}
	0 & 0 & \cdots & \frac{\mu (1-q)}{\lambda+\mu+\alpha} 
	\\
	\frac{\mu (1-q)\, (x - \lfloor x \rfloor)}{\lambda+\mu+\alpha} & 0 & \cdots & 0 \\
	\vdots & \ddots & \ddots & \vdots  \\
	0 & \cdots & \frac{\mu (1-q) \, (x - \lfloor x \rfloor)}{\lambda+\mu+\alpha} &  0
	\end{bmatrix}
	\in \mathbb{R}^{(\lfloor x \rfloor+1)  \times (\lfloor x \rfloor+1 )} 
	\end{align}
\end{appendix}
\end{document}